\else\hypersetup{tex4ht}\fi
\let\origdoublepage\cleardoublepage
\newcommand{\clearemptydoublepage}{%
\clearpage{\pagestyle{empty}\origdoublepage}}
\let\cleardoublepage\clearemptydoublepage
\let\oldindex\index
\renewcommand*{\index}[1]{\oldindex{#1}\ignorespaces}
\newtheorem{theorem}{Theorem}[section]
\newtheorem{lemma}[theorem]{Lemma}
\newtheorem{definition}[theorem]{Definition}
\newtheorem{example}[theorem]{Example}
\newtheorem{prop}[theorem]{Proposition}
\newtheorem{thm}[theorem]{Theorem} 
\newtheorem{cor}[theorem]{Corollary}
\newtheorem{remark}[theorem]{Remark}
\crefname{thm}{Theorem}{Theorems}
\Crefname{thm}{Theorem}{Theorems}
\crefname{problem}{Problem}{Theorems}
\Crefname{problem}{Problem}{Theorems}
\crefname{conjecture}{Conjecture}{Theorems}
\Crefname{conjecture}{Conjecture}{Theorems}
\crefname{proposition}{Proposition}{Propositions}
\Crefname{proposition}{Proposition}{Propositions}
\crefname{prop}{Proposition}{Propositions}
\Crefname{prop}{Proposition}{Propositions}
\crefname{cor}{Corollary}{Corollaries}
\crefname{lem}{Lemma}{Lemmas}
\Crefname{lem}{Lemma}{Lemmas}
\theoremstyle{definition}
\crefname{assump}{Assumption}{assumptions}
\Crefname{assump}{Assumption}{Assumptions}
\crefname{problem}{Problem}{problems}
\Crefname{problem}{Problem}{Problems}
\crefname{definition}{Definition}{definitions}
\Crefname{definition}{Definition}{Definitions}
\crefname{defn}{Definition}{definitions}
\Crefname{defn}{Definition}{Definitions}
\crefname{remark}{Remark}{Remarks}
\Crefname{remark}{Remark}{Remarks}
\crefname{rmk}{Remark}{Remarks}
\Crefname{rmk}{Remark}{Remarks}
\crefname{example}{Example}{Examples}
\Crefname{example}{Example}{Examples}
\crefname{align}{}{}
\Crefname{align}{}{}
\crefname{equation}{}{}
\Crefname{equation}{}{}
\DeclareMathOperator{\trace}{{trace}}
\DeclareMathOperator{\argmax}{{argmax}}
\DeclareMathOperator{\blkdiag}{{blkdiag}}
\DeclareMathOperator{\Diag}{{Diag}}
\DeclareMathOperator{\relint}{{relint}}
\DeclareMathOperator{\rank}{{rank}}
\DeclareMathOperator{\spann}{{span}}
\DeclareMathOperator{\dist}{{dist}}
\newcommand{\LP}{\textbf{LP}\,}
\newcommand{\LPp}{\textbf{LP}}
\newcommand{\SDP}{\textbf{SDP}\,}
\newcommand{\SDPp}{\textbf{SDP}}
\newcommand{\FR}{\textbf{FR}\,}
\newcommand{\FRp}{\textbf{FR}}
\DeclareMathOperator{\sd}{SD}  
\DeclareMathOperator{\maxsd}{maxSD}  
\DeclareMathOperator{\ips}{IPS}  
\newcommand{\BFS}{\textbf{BFS}\,}
\newcommand{\BFSp}{\textbf{BFS}}
\def\R{\mathbb{R}}
\def\N{\mathbb{N}}
\def\Rn{\mathbb{R}^n}
\def\Rm{\mathbb{R}^m}
\def\Rmn{\R^{m\times n}}
\def\Rnp{\mathbb{R}_+^n}
\def\Rnpp{\mathbb{R}_{++}^n}
\def\Snp{\mathbb{S}^n_+}
\newcommand{\cB}{{\mathcal B}}
\newcommand{\cC}{{\mathcal C}}
\newcommand{\cD}{{\mathcal D}}
\newcommand{\cF}{{\mathcal F}}
\newcommand{\cG}{{\mathcal G}}
\newcommand{\cI}{{\mathcal I}}
\newcommand{\cJ}{{\mathcal J}}
\newcommand{\cK}{{\mathcal K}}
\newcommand{\cN}{{\mathcal N}}
\newcommand{\cP}{{\mathcal P}}
\newcommand{\cR}{{\mathcal R}}
\newcommand{\cS}{{\mathcal S}}
\newcommand{\cV}{{\mathcal V}}
\newcommand{\bbm}{\begin{bmatrix}}
\newcommand{\ebm}{\end{bmatrix}}
\newcommand{\bem}{\begin{pmatrix}}
\newcommand{\eem}{\end{pmatrix}}
\def\<{\langle}
\def\>{\rangle}
\newcommand{\textdef}[1]{\index{#1}\textit{#1}}
\numberwithin{algorithm}{section}
\numberwithin{equation}{section}
\numberwithin{figure}{section}
\numberwithin{table}{section}
\DeclareMathOperator{\supp}{supp}
\DeclareMathOperator{\face}{face}
\DeclareMathOperator{\nul}{null}
\DeclareMathOperator{\range}{range}
\begin{document}



	\title{
Revisiting  Degeneracy, Strict Feasibility, 
Stability,\\ in\\ Linear Programming}
	             \author{
 \href{}{Haesol Im}\thanks{
Department of Combinatorics and Optimization
	Faculty of Mathematics, University of Waterloo, Waterloo,
	Ontario, Canada N2L 3G1;
	\url{}}
	\and
	\href{http://www.math.uwaterloo.ca/~hwolkowi/}
	{Henry Wolkowicz}%
	\thanks{Department of Combinatorics and Optimization
	Faculty of Mathematics, University of Waterloo, Waterloo,
	Ontario, Canada N2L 3G1; Research supported by The Natural
	Sciences and Engineering Research Council of Canada;
	\url{www.math.uwaterloo.ca/\~hwolkowi}.
	}
	}

\maketitle

\begin{abstract}
Currently, the simplex method and the interior point method are indisputably
the most popular algorithms for solving linear programs, \LPp s.
Unlike general conic programs, \LPp s with a finite optimal
value do not require strict feasibility in order to establish strong
duality. Hence strict feasibility is seldom a concern, even though
strict feasibility is equivalent to stability and a compact dual optimal set.
This lack of concern is also true for other types of 
degeneracy of basic feasible solutions in \LPp.
In this paper we discuss that the specific degeneracy that arises from
lack of strict feasibility necessarily 
causes difficulties in both simplex and interior point methods. In
particular, we show that the lack of strict feasibility implies that every basic
feasible solution, \BFSp, is degenerate; thus conversely, the existence of a nondegenerate \BFS implies that strict feasibility (regularity) holds.
We prove the results using facial reduction and simple linear algebra.
In particular, the facially reduced system reveals the 
implicit non-surjectivity of the linear map of the equality constraint system.
As a consequence, we emphasize that facial reduction involves two steps
where, the first guarantees strict feasibility, and the second recovers 
full row rank of the constraint matrix. This illustrates the implicit
\emph{singularity} of problems where strict feasibility fails, and also
helps in obtaining new efficient techniques for preproccessing.
We include an efficient preprocessing method that can be performed 
as an extension of phase-I of the two-phase simplex method.
We show that this can be used to avoid the loss of precision for many
well known problem sets in the literature, e.g.,~the NETLIB problem set.
\end{abstract}

	{\bf Keywords:}
	linear programming, facial reduction, preprocessing, degeneracy, implicit problem singularity
	
	{\bf AMS Classification:}
	90C05, 90C49.

\tableofcontents
\listoftables
\listoffigures
\listofalgorithms


\section{Introduction}


\index{\LPp, linear program}
\index{linear program, \LPp}

The \textdef{Slater condition} (strict feasibility) is a useful property 
for optimization models to have.
Unlike general conic programs, linear programs (\LPp s) do not require strict
feasibility as a constraint qualification to guarantee strong
duality, and therefore, it is often not discussed. In fact, degeneracy
in general is not considered to be a serious concern in linear
programming.
The Goldman-Tucker Theorem \cite{MR21:633} is related in that it 
guarantees a primal-dual optimal solution satisfying strict 
complementarity $x^*+z^* > 0$ for the standard form \LPp.
However, it does not guarantee the existence of a strictly feasible
primal solution $\hat x > 0$. 
The lack of strict feasibility for an \LP does not seem to cause
problems at first glance, especially when the simplex method is used. 
In this manuscript, we show that the failure of strict feasibility
results in degeneracy problems when simplex-type methods are used. 
More specifically, the lack of strict feasibility inevitably renders
\LPp s degenerate, i.e.,~\emph{every basic feasible solution is
degenerate}.\footnote{Conversely, if we can find 
\emph{one} nondegenerate basic feasible solution, then strict
feasibility holds.} Note that strict feasibility along
with full row rank of the linear constraint is the
\textdef{Mangasarian-Fromovitz} constraint qualification
\cite{MR34:7263}. This is equivalent to a compact dual optimal set and is 
equivalent to stability with respect to perturbations of the
right-hand side.

The simplex method \cite{Dant:63} is one of the most popular and 
successful algorithms for solving linear programs. 
Degeneracy, a zero basic variable, could result in cycling and noncovergence.
There are many anti-cycling rules, see
e.g.,~\cite{MR2072929,MR0069584,MR459599,MR1260019,MR1260009} 
and the references therein.
However, techniques for the resolution of degeneracy often result in 
\textdef{stalling} \cite{ro,MR0056264,MR1885204,MR850380}, i.e.,~result
in taking a large number of iterations before leaving a degenerate point and can even fail to leave with current techniques~\cite{MR2072929}.
Degeneracies are known to cause numerical issues when interior point
methods are used, e.g., \cite{MR94j:90021}. For example, degeneracy
can result in singularity of the Jacobian of the optimality
conditions, and thus also in ill-posedness and
loss of accuracy~\cite{GoWo:04}.
We note that the method most often used in the literature when
converting a problem that has a free variable into standard form, 
is to replace the free variable by the difference of two nonnegative variables.
This results in an unbounded primal optimal set and strict
feasibility failing for the dual problem, i.e.,~from our
work we see that this standard approach changes a
well-posed problem into an ill-posed one.

\index{\FRp, facial reduction} 
Our main results on the degeneracy arising from loss of
strict feasibility are shown using the effective preprocessing tool called 
\textdef{facial reduction, \FRp}. 
For a problem lacking strict feasibility, facial reduction strives to 
formulate an equivalent problem that has a Slater point. 
By examining the facially reduced system, we obtain two results. First,
we show that every basic feasible solution is degenerate when strict
feasibility fails. This leads to an efficient approach for
eliminating variables that are fixed at $0$.
Second, we investigate implicit redundancies as a source of instability arising in problems where strict feasibility fails. We see that the linear
map of the facially reduced system is non-surjective, i.e.,~the original constraints are implicitly redundant.
Finally, we use these results to develop an efficient preprocessing
technique to obtain strict feasibility. This technique is illustrated on
instances from the 
\href{https://www.netlib.org/lp/}{NETLIB} data set.

The contribution of this manuscript is threefold;  
(i) We provide the complete description of the facially reduced system of a linear program and introduce related notions of singularity; (ii) We show that every basic feasible solution of a standard linear program is degenerate when strict feasibility fails;
(iii) We propose and illustrate an efficient preprocessing scheme that can 
be performed as an extension of phase-I of the two-phase simplex method.
This technique allows for eliminating variables fixed at $0$, and thus
regularizing and simplifying the \LPp.

The manuscript is organized as follows.
In \Cref{sec:KnownResults} we present the background and notations.
Included are the notions of degeneracy,
facial reduction and three types of singularity degree. We
then describe what facial reduction tries to achieve.
In \Cref{sec:MainResult} we present our main result and immediate
corollaries, 
as well as the efficient preprocessing method that can be used as an
extension of phase-I of the two-phase simplex method.
In addition, we relate our main result to known results in the
literature, such as distance to infeasibility.
In \Cref{sec:Numerics} we illustrate algorithmic performance of 
interior point methods and the simplex method under the lack of 
strict feasibility. We present our conclusions in \Cref{sec:Conclusion}.

\section{Preliminaries}
\label{sec:KnownResults}

\subsection{Background and Notation}
\label{sec:Background}

\index{$\Rnp$, nonnegative orthant}
\index{nonnegative orthant, $\Rnp$}
\index{$\Rnpp$, positive orthant}
\index{positive orthant, $\Rnpp$}
\index{$\Rmn$, real vector space of $m$-by-$n$ matrices}
\index{real vector space of $m$-by-$n$ matrices, $\Rmn$}
\index{$A_\cI$, submatrix of $A$ with columns in $\cI$}
\index{$\relint$, relative interior}
\index{relative interior, $\relint$}
\index{$\<\cdot,\cdot\>$, inner product}
\index{$\supp$}

We let $\Rn,\Rmn$ be the standard real vector spaces of $n$-coordinates and $m$-by-$n$ matrices, respectively.
We use $\Rnp$ ($\Rnpp$, resp.) to denote the $n$-tuple with nonnegative (positive) entries.
We use $\<\cdot,\cdot\>$ to denote the usual inner product. 
Given a vector $x\in \Rn$, we let $\supp(x)$ to denote the index set $\{i : x_i \ne 0\}$.
Given a matrix $A\in \Rmn$, we adopt the MATLAB notation to denote a
submatrix of $A$. Given a subset $\cI$ of column indices, $A_\cI \in \R^{m\times |\cI|}$ is the submatrix of $A$ that contains the columns of $A$ in $\cI$.
We also use the notation $A_\cI$ to denote $A_\cI$ when the meaning is clear. 
Given a convex set $\cC$, $\relint(\cC)$ denotes the relative interior of the set $\cC$.


\index{$\cF$, feasible region}
\index{feasible region, $\cF$}
\index{$(\cP)$}

Throughout this manuscript, we work with feasible \LPp s in standard form
with finite optimal value:
\[
(\cP) \qquad \textdef{$p^*$}=
          \min_x \left\{ c^Tx \ : \ Ax =b, \ x\ge 0 \right\},
\] 
where $p^*\in \R, A \in \Rmn, b\in \Rm$ and $c \in \Rn$.
We assume that $\rank(A) = m$, i.e., there is no redundant constraint. 
We use $\cF$ to denote the feasible region of ($\cP$)
\begin{equation}
\label{eq:feasibleSet}
\cF = \{x \in \Rn: Ax = b, \ x \ge 0 \}.
\end{equation}

\subsubsection{Degeneracy in \LPp}
Given an index set $\cB \subset \{1,\ldots,n\}, |\cB|=m$,
a point $x \in \cF$ is called a \textdef{basic feasible solution, \BFS}, if 
$A_\cB $ is nonsingular and $x_i = 0, \ \forall i \in \{1,\ldots,n\}\setminus \cB$.
It is well-known that the simplex method iterates from \BFS to \BFSp.
A basic feasible solution 
$x\in \cF$ is \emph{nondegenerate} if $x_i>0, \ \forall i \in \cB$; it
is \emph{degenerate} if $x_i = 0$, for some $i \in \cB$.
It is clear that every basic feasible solution has
at most $m$ positive entries.\footnote{We mainly consider primal
degeneracy here, though everything follows through for dual degeneracy.
In fact, there are clear connections from complementary slackness
between variables positive in every \BFS and dual variables fixed at $0$.}

We partition the index set \textdef{$\{1,\ldots,n\}$} as
\index{degenerate \BFS}
\index{nondegenerate \BFS}
\index{\BFS, basic feasible solution}
\[
\{1,\ldots,n\}= \cI_+ \cup \cI_0, \text{ where } 
\textdef{$\cI_0 := \{ i \,:\, x_i = 0, \forall x\in \cF\}$}
\text{ and } 
\textdef{$\cI_+ = \{1,\ldots,n\} \backslash \cI_0$},
\]
i.e.,~$\cI_0$ denotes the variables \textdef{fixed at $0$}.
Note that fixed variables are identified during preprocessing in the
literature if the upper and lower bounds are equal, 
e.g.,~\cite{MR2020137,ANDERSEN:95:D,Huang04preprocessingand}.
However, the set $\cI_0$ is not as easily identified.


There are in fact several types of degeneracy. Let $\bar x$ be a given
\BFS with basis $\cB$. (Wlog $\cB = \{1,\ldots,m\}$.)
We can write the equivalent canonical form representation of the
feasible set using the basis at $\bar x$:
\begin{equation}
\label{eq:canonform}
\cF = \left\{
x = \begin{pmatrix}x_\cB \cr x_\cN\end{pmatrix} \,:\,
x_\cB = b - A_\cB^{-1}A_\cN x_\cN \geq 0, x_\cN\geq 0
\right\}.
\end{equation}
In this form $x_\cN\in \R_+^{n-m}$, we have $n$ inequality constraints, and
we see that degeneracy is equivalent to having an active
set with cardinality greater than $n-m$. 
This divides into two types corresponding to the sets $\cI_0,\cI_+$,
respectively:
(i) inequalities that
are active in every \BFS and correspond to variables in $\cI_0$ above;
(ii) those that are not active in at least one \BFSp. The
geometry of (i) is clear as there is no Slater point and $\cF$ is a
subset of a face of the nonnegative orthant. For (ii) the geometry is
that some of the constraints are redundant in one of two ways, 
i.e.,~that discarding them does not change the feasible set nor the optimality conditions if $\bar x$ is optimal.

\begin{remark}
\label{rem:addredconstr}
We note that adding redundant constraints is done in
e.g.,~\cite{MR2367063,MR2238662} to show that the central path for 
interior point methods can follow the boundary closely, i.e.,~behave
very poorly. These redundant constraints correspond to a positive
variable in each \BFSp, i.e.,~to an inequality  in~\cref{eq:canonform}
that is never active.  Complementary slackness implies that they
correspond to variables fixed at $0$ in the dual problem, thus
emphasizing that \FR on the dual could avoid some of these difficulties.
\end{remark}

\subsection{Facial Reduction}
\label{sec:FacialReduction}

In this section we describe the concept of facial reduction and  
present the properties that are used to establish the main result. 
We emphasize in this paper that facial reduction for ($\cP$) involves \emph{two}
steps: first, obtain an equivalent problem with strict feasibility;
second, recover full row rank of the constraint matrix. Note that
full row rank is \emph{always} lost during the first step.

Let $K\subset \Rn$ be a convex set.
A convex set $F \subseteq K$ is called a \textdef{face} of $K$, 
denoted $F\unlhd K$, if  for all $y,z \in K$ with $x = \frac{1}{2}(y +z) \in F$, we have  $y,z \in F$. 
Given a convex set $\cC\subseteq K$, the \textdef{minimal face} for $\cC$ is the intersection of all faces containing the set $\cC$.

\begin{prop}\cite[Theorem 3.1.3]{DrusWolk:16}(theorem of the alternative)
\label{prop:Farkas}
For the feasible system of~\cref{eq:feasibleSet}, exactly one of the following statements holds:
\begin{enumerate}
\item There exists $x \in \R^n_{++}$ with $Ax = b$, i.e.,~strict
feasibility holds;
\item There exists $y\in \Rm$ such that 
\begin{equation}
\label{eq:auxsystem}
0\neq z := A^Ty \in  \Rn_+,  \ \text{ and } \  \<b,y\>=0.
\end{equation}
\end{enumerate}
\end{prop}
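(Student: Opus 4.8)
The plan is to prove the dichotomy in its two natural halves: the two alternatives are mutually exclusive, and at least one of them must hold. Exclusivity is immediate. If $x\in\Rnpp$ satisfies $Ax=b$ and $y$ satisfies $0\neq z:=A^Ty\ge 0$ with $\<b,y\>=0$, then $0=\<b,y\>=\<Ax,y\>=\<x,A^Ty\>=\<x,z\>$, yet $x>0$ together with $0\le z\neq 0$ forces $\<x,z\>>0$, a contradiction. So items~1 and~2 cannot hold simultaneously, and it remains to show that at least one holds.

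The content is in producing $y$ when strict feasibility fails while $\cF\neq\emptyset$. First I would observe that some coordinate must vanish identically on $\cF$: if for every index $j$ there were a feasible point $x^{(j)}$ with $x^{(j)}_j>0$, then by convexity the average $\tfrac1n\sum_{j=1}^n x^{(j)}\in\cF$ would be strictly positive, contradicting the failure of item~1. Fix such an index $j$, so that $x_j=0$ for all $x\in\cF$. Now invoke linear programming strong duality for $\min\{-x_j : Ax=b,\ x\ge 0\}$, which is feasible with optimal value $0$: its dual $\max\{\<b,y'\>:A^Ty'\le -e_j\}$ is solvable with optimal value $0$, say at $y^\star$. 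Putting $y:=-y^\star$ we obtain $z:=A^Ty\ge e_j\ge 0$, so $z\in\Rnp$; the $j$-th entry of $z$ is at least $1$, so $z\neq 0$; and $\<b,y\>=-\<b,y^\star\>=0$. This is precisely item~2.

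An alternative, more geometric route reaches the same conclusion without LP duality. Let $\cK:=\{Ax : x\ge 0\}$, a polyhedral, hence closed, convex cone. Since a linear image of a relative interior is the relative interior of the image, $\{Ax : x>0\}=\relint(\cK)$; thus the failure of item~1, together with $b\in\cK$, says exactly that $b\in\cK\setminus\relint(\cK)$. A point of a convex set lying outside its relative interior admits a \emph{proper} supporting hyperplane: there is $y$ with $\<y,w\>\ge\<y,b\>$ for all $w\in\cK$, with strict inequality for at least one $w\in\cK$. Testing $w=0$ and $w=2b$ (both in the cone $\cK$) forces $\<y,b\>=0$; then $\<y,Ax\>\ge 0$ for every $x\ge 0$ gives $A^Ty\ge 0$; and if $A^Ty=0$ the hyperplane would contain all of $\cK$, contradicting properness, so $A^Ty\neq 0$. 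Again item~2 holds.

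In either route the delicate point is not the existence of \emph{some} separating vector but pinning down the \emph{equality} $\<b,y\>=0$ (rather than a one-sided inequality) simultaneously with the nontriviality $z\neq 0$. In the duality argument both fall out of reading off a dual-optimal solution at a problem whose value is exactly $0$; in the geometric argument both come from demanding a \emph{proper} supporting hyperplane, available precisely because $b$ is not relative-interior. The one routine fact worth isolating in the geometric argument is the identity $\{Ax : x>0\}=\relint(\cK)$.
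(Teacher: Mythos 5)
The paper does not supply a proof of this proposition: it is cited verbatim from \cite[Theorem~3.1.3]{DrusWolk:16}, so there is no in-paper argument to compare against. Judged on its own, your proof is correct. The exclusivity half is exactly right, and both of your existence arguments go through. The convexity observation that failure of strict feasibility forces some coordinate $j$ to vanish identically on $\cF$ is clean, and reading $y$ off a dual-optimal solution of $\min\{-x_j : Ax=b,\ x\ge 0\}$ correctly produces $z=A^Ty\ge e_j$ together with $\<b,y\>=0$. The geometric route is also correct: the identity $\{Ax : x>0\}=\relint(\cK)$ is Rockafellar's $\relint(A\,C)=A\,\relint(C)$ applied to $C=\Rnp$, and proper separation of $\{b\}$ from $\cK$ delivers $\<b,y\>=0$ (by testing $w=0$ and $w=2b$) and $A^Ty\ne 0$ simultaneously, which is the delicate part you correctly isolate. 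One remark worth making explicit: deriving a Farkas-type alternative from LP strong duality is a slightly circular choice of foundations, since strong duality is in turn customarily proved from a Farkas lemma; your second, separation-theoretic route avoids that circularity and is the more natural one to cite here, and it is also closer in spirit to the facial-reduction literature the paper is drawing on.
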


\Cref{prop:Farkas} gives rise to a process called \emph{facial reduction}.
The \textdef{facial reduction, \FRp}, for an \LP is a process of
identifying the minimal face of $\Rn_+$ containing the feasible
set $\cF=\{x \in \Rnp : Ax =b\}$. 
By finding the minimal face, we can work with a problem that lies in a smaller
dimensional space and that statisfies strict feasibility.
The \FR process, i.e., finding the minimal face,  is usually done by
solving a sequence of auxiliary systems \cref{eq:auxsystem}. 
More details on \FR on general conic problems can be found in
\cite{bw1,bw3,DrusWolk:16,Sremac:2019,permenter2017reduction}.

We now describe how the set $\cF$ (see \cref{eq:feasibleSet}) is represented after \FRp.
Suppose that strict feasibility fails. 
Then \Cref{prop:Farkas} implies that there must exist a nonzero $y\in \Rm$ satisfying 
\begin{equation}
\label{eq:xexposed}
\<x,A^Ty\> = \<Ax,y\> = \<b,y\> = 0 , \ \forall x \in \cF.
\end{equation}
Hence, every $x\in \cF$ is perpendicular to the nonnegative vector $z=A^Ty$.
We call this vector $z=A^Ty$ an \textdef{exposing vector} for $\cF$, and let the cardinality of its support be $s_z=| \{i : z_i>0\}|$.
Then $z = \sum\limits_{j=1}^{s_z} z_{t_j} e_{t_j}$, where  $t_j$ is in increasing order. 
We now have
\[
0=\<z,x\> \ \text{ and } \  x,z\in \Rnp \  \implies \, x_iz_i = 0, \ \forall i,
\]
i.e.,~the positive elements in $z$ identify the corresponding elements
in $x$ that are \textdef{fixed at $0$}.
Then $x = \sum\limits_{j=1}^{n-s_z} x_{s_j} e_{s_j}$, where $s_j$ is in increasing order. 
We define the matrix with unit vectors for columns
\[
V  =\begin{bmatrix}e_{s_1} & e_{s_2} & \ldots & e_{s_{n-s_z}}
    \end{bmatrix} \in \R^{n\times (n-s_z)}.
\]
Then we have
\begin{equation}
\label{eq:setEqauivalence}
\cF = \{x \in \Rn_+ : Ax = b\} =
\{x = Vv \in \Rn :   AVv = b, v \in \R^{n-s_z}_+\} .
\end{equation}
We call this matrix $V \in \R^{n\times (n-s_z)}$ a \textdef{facial range vector}.
The facial range vector restricts the support of all feasible $x$.
We use the identification \cref{eq:setEqauivalence} throughout this manuscript. 
This concludes the first step of \FRp, i.e.,
identifying all the variables that are fixed at
$0$.\footnote{Note that this can be done in one step for linear
programs, i.e.,~the singularity degree for \LP is one. We discuss 
this in~\Cref{sec:FacialReduction}.}

\index{$s_z$, support of exposing vector for $\cF$}
\index{support of exposing vector for $\cF$, $s_z$}

It is known that every facial reduction step results in at least one constraint
being redundant, see e.g., \cite{bw3}, \cite[Lemma 2.7]{ImWolk:21}, and \cite[Section 3.5]{Sremac:2019}.
For completeness we now include a short proof tailored to 
\LPp, see~\Cref{lemma:ConstrRedundant}.
\begin{lemma}
\label{lemma:ConstrRedundant}
Consider the facially reduced feasible set
\begin{equation*}
\label{eq:setEqauivalenceV}
\cF_r = \left\{v :  AVv = b, v \in \R^{n-s_z}_+\right\} .
\end{equation*}
Then at least one linear constraint of the \LP is redundant.
\end{lemma}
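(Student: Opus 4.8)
The plan is to exhibit an explicit nonzero vector in the left null space of the augmented matrix $\begin{bmatrix} AV & b\end{bmatrix}$. Since this matrix has $m$ rows, producing such a vector forces $\rank\begin{bmatrix} AV & b\end{bmatrix} < m$, which is precisely the assertion that one of the $m$ equality constraints defining $\cF_r$ is a linear combination of the others and can be discarded without changing the feasible set.

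First I would unpack the structure of the facial range vector $V$. By construction, its columns are the standard unit vectors $e_{s_1},\ldots,e_{s_{n-s_z}}$ indexed by the complement of the support of the exposing vector $z = A^T y$; equivalently, $\{s_1,\ldots,s_{n-s_z}\} = \{i : z_i = 0\}$. Consequently $V^T z$ collects the entries $z_{s_1},\ldots,z_{s_{n-s_z}}$, every one of which vanishes, so that
\[
y^T (AV) = (A^T y)^T V = z^T V = 0.
\]

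Next I would invoke the defining property of the exposing vector, namely $\langle b, y\rangle = 0$ recorded in \cref{eq:auxsystem,eq:xexposed}. Stacking this with the previous display gives
\[
y^T \begin{bmatrix} AV & b\end{bmatrix} = 0, \qquad y \neq 0,
\]
where $y\neq 0$ because it is the nonzero multiplier furnished by \Cref{prop:Farkas} when strict feasibility fails. Hence the $m$ rows of $\begin{bmatrix} AV & b\end{bmatrix}$ are linearly dependent, and for any index $i$ with $y_i \neq 0$ the $i$-th row of $\begin{bmatrix} AV & b\end{bmatrix}$ is a linear combination of the remaining rows; since $\cF = \cF_r$ is nonempty, deleting that row leaves $\cF_r$ unchanged. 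This establishes that at least one constraint of the facially reduced \LP is redundant.

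I do not expect a genuine obstacle here: once the structure of $V$ is made explicit, the argument is a two-line linear algebra computation. The only point that needs care is the bookkeeping that the column index set of $V$ is exactly disjoint from $\supp(z)$, so that $z^T V = 0$; this is immediate from the construction preceding \cref{eq:setEqauivalence}, where the positive entries of $z$ were used to pin the corresponding entries of every $x \in \cF$ to zero.
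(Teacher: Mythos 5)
Your proof is correct and takes essentially the same route as the paper: both hinge on the identity $y^T(AV) = z^T V = 0$ with $y\neq 0$, which forces the rows of $AV$ to be linearly dependent. You go slightly further by also explicitly recording $\langle b,y\rangle = 0$ to show that the dependence extends to the augmented matrix $\begin{bmatrix} AV & b\end{bmatrix}$ and hence that a row can genuinely be discarded without altering $\cF_r$; the paper leaves this last step implicit (it follows anyway from feasibility of $\cF_r$), so your version is marginally more self-contained but not a different argument.
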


\index{$I$, the identity matrix}
\begin{proof}
Let $z=A^Ty$ be the exposing vector satisfying the auxiliary
system \cref{eq:auxsystem}. And let $V$ be a facial range vector
induced by $z$. Then
\begin{equation}
\label{eq:redunProof}
0 = V^Tz = V^T A^Ty  = (AV)^Ty =\sum_{i=1}^m y_i ((AV)^T)_i. 
\end{equation}
Since $y\in \Rm$ is a nonzero vector, the rows of $AV$ are linearly dependent.
\end{proof}
We now see the result of the full two-step facial reduction process,
i.e.,~we get a constraint matrix of full row rank:
\begin{equation}
\label{eq:setEqauivalencefullrow}
\cF = \{x \in \Rn_+ : Ax = b\} =
\{x = Vv \in \Rn :   P_{\bar m} AVv = P_{\bar m} b, \ v \in \R^{n-s_z}_+\},
\end{equation}
where \textdef{$P_{\bar m} : \Rm \to \R^{\bar m}$}, 
\textdef{$\bar m ={\rank(AV)}$}, is the simple 
projection that chooses the linearly independent rows of $AV$.
This concludes the second step of \FRp, i.e., guaranteeing the full rank.
We include a graphical illustration of the two-step \FR process; see \Cref{fig:twostepFRpicture}.
\begin{figure}[h]
\centering
\includegraphics[height=3.5cm]{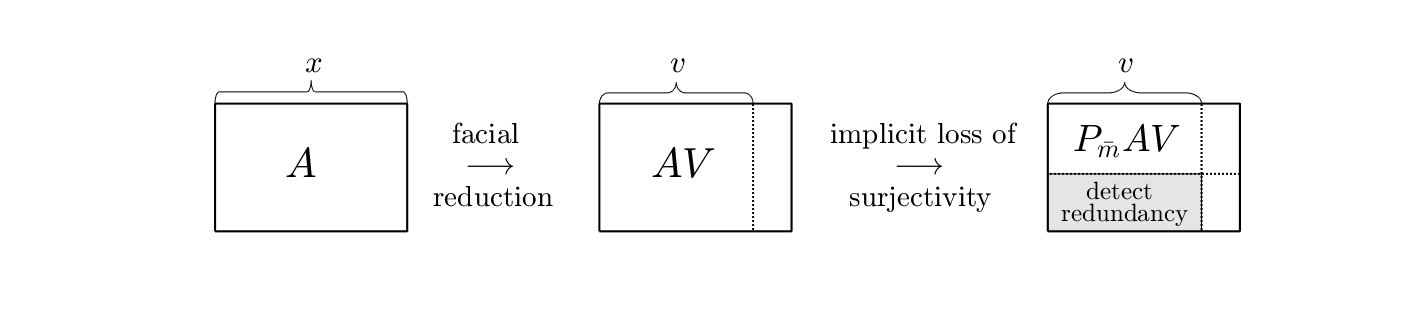}
\vspace{-0.5cm}
\caption{A graphical illustration of the two-step facial reduction}
\label{fig:twostepFRpicture}
\end{figure}

\index{$P_{\bar{m}}AV$}
\index{$P_{\bar{m}}b$}
\index{distance to infeasibility}

For a general conic problem, such as semidefinite programs (\SDPp), 
the facial reduction iterations do not necessarily end in one iteration; see
\cite{Sremac:2019,SWW:17,ScTuWonumeric:07}.
And there is a special name for the minimum length of \FR iterations.
\begin{definition}[{\cite[Sect. 4]{S98lmi}}]
\label{def:SD_definition}
Given a spectrahehedron $\cS$ in a closed convex cone $\cK$, the \textdef{singularity degree, $\sd(\cS)$}
of $\cS$ is the \underline{smallest} 
number of facial reduction iterations for finding $\face(\cS, \cK)$, the minimal face of $\cK$ containing $\cS$.
\end{definition}

It is known that \FR for \LPp s can be done in one iteration, i.e., $\sd(\cF)\le 1$; see [22,
Theorem 4.4.1]. 
\Cref{prop:Farkas} and \Cref{lemma:ConstrRedundant} imply that \emph{any} solution to the system~\eqref{eq:auxsystem}  gives rise to a strict reduction in the number of variables and the number of equality constraints.
This gives rise to the following two novel notions of singularity.

\begin{definition}
\label{def:ips}
Let $K\subseteq \Rn$ be a closed convex cone with corresponding
feasible set $\cS = \{x\in K \,:\, Ax=b\}$ and facially reduced feasible
set $\{v\in PK \,:\, (PA\cV)(v)=Pb, v\in \R^r\}$, where $PA\cV$ is
onto $\R^{m_r}$ and $PK$ is the cone defined over the smaller dimensional space. Then the 
\textdef{implicit problem singularity, $\ips(\cS)= m-m_r$}.
\\Moreover, the \textdef{max-singularity degree} of
$\cS$, denoted $\maxsd(\cS)$, is the \underline{largest} number of 
nontrivial facial reduction iterations for finding $\face(\cS,K)$. 
\end{definition}
\index{$\ips(\cS)= m-m_r$, implicit problem singularity}
\index{$\maxsd(\cS)$, largest number nontrivial facial reduction steps}
\index{largest number nontrivial facial reduction steps, $\maxsd(\cS)$}

The singularity degree is used in~{\cite[Sect. 4]{S98lmi}} for providing
a H\"older regularity constant for semidefinite programs. This is then
used in~\cite{DrusLiWolk:14} to derive a convergence rate for
alternating projection methods for semidefinite programs.
Note that $\maxsd(\cS)$ can be a larger lower bound of $\ips(\cS)$ 
than $\sd(\cS)$, since at
least one linear constraint becomes redundant at each \FR iteration.
The effect on ill-conditioning
of larger values of $\ips$ is seen empirically in 
\Cref{sect:SVDips}.\footnote{\Cref{def:ips} can be used to 
strengthen the upper bound
on the rank of \SDP solutions in~\cite{ImWolk:21}, i.e.,~we get
$t(r) \le m-\ips(\cS) \le m-\maxsd(\cS) \le m-\sd(\cS) \le m$, 
where $t(r)$ is the triangular number of the rank $r$.
}

\subsubsection{Preprocessing in \LP}
An essential step for simplex and interior point methods is
preprocessing, see
e.g.,~\cite{MR2020137,ANDERSEN:95:D,MR1478041,Huang04preprocessingand} and the
references therein. One specific preprocessing step refers to detecting a
\emph{fixed variable}. These are generally detected when the
upper and lower bounds on a variable are equal. Fixed variables can
also be detected when an invertible block $A_{11}$ can be isolated
$A = \begin{bmatrix} A_{11} & A_{12}=0\cr A_{21} & A_{22}
\end{bmatrix},\, b = \begin{pmatrix} b_{1} \cr b_{2}\end{pmatrix}$.
With
$x = \begin{pmatrix} x_{1} \cr x_{2}\end{pmatrix}$, we can eliminate
$x_1= A_{11}^{-1}b_1$ and discard the first block of now redundant rows,
along with the first block of columns. 
If  $b_1=0$ then we have trivially identified variables fixed at zero
and removed redundant rows and columns. The remaining block $A_{22}$
remains full row rank as happens in Gaussian elimination.

In general, \FR for linear programs refers to identifying variables fixed
at $0$, and removing them along with corresponding columns and redundant
rows. In general, this is not as simple as above, and
the theorem of the alternative is needed.
As a consequence of our main result, we see below
that a single step of the simplex method, a phase-I part B approach,
yields many of these variables that are identically zero on the feasible
set.

One of the standard assumptions in linear programming is full row rank 
of $A$. As we observed in \Cref{lemma:ConstrRedundant}, 
each \FR step results in linear dependence of the constraints.
We now summarize two available methods for extracting a maximal 
linearly independent subset of rows of $AV$. 
The first method uses a rank-revealing QR decomposition\footnote{\url{https://www.mathworks.com/matlabcentral/fileexchange/77437}}. 
Let $M =(AV)^T$. 
Let $M I(:,\pi) =QR$ be a QR factorization
where $\pi$ is a permutation vector, $Q$ is a orthogonal matrix and $R$ is an upper triangular matrix with a non-increasing diagonal in absolute value. 
The matrix $I(:,\pi)$ permutes the columns of $M$.
If $M$ has linearly dependent columns, then the matrix $R$ contains zeros on its diagonal. 
Let $r$ be the number of the nonzero diagonal entries of $R$. 
Then, $\pi(1:r)$ returns the subset of columns indices of $M$ that are linearly independent. 
Another available method makes use of artificial variables \cite[Box 8.2]{MR717219}.
It constructs $\begin{bmatrix} I & AV \end{bmatrix}$ and sets the initial basis matrix to be the first $m$ columns.
Then it performs a variant of the phase-I of the two-phase simplex method to drive the basic variables out of the basis one by one. 
When such an operation is not applicable, a 
linearly dependent row of $AV$ is detected. 
Computational improvements of this method are made 
in~\cite{AndersenErlingD95,springerlink:10.1007/s00291-003-0130-x}.
A more recent method is the
rank revealing Gaussian elimination by the maximum volume concept
given in~\cite{MR4054120}.

\section{Main Result and Consequences}
\label{sec:MainResult}

In this section we present our main result, see~\Cref{thm:LPdegen}.
We provide two proofs:
one takes an algebraic approach by using the definition of the basic
feasible solution; and the other takes a geometric approach by 
using extreme points. 
Both proofs rely heavily on~\Cref{lemma:ConstrRedundant}.
In \Cref{sec:phase1partB} we present an efficient preprocessing scheme that can be used as an extension of the phase-I of the two-phase simplex method.
In~\Cref{sec:Discussions} we include immediate corollaries of the main result and interesting discussions.

\subsection{Lack of Strict Feasibility and Relations to Degeneracy}
\label{sec:LackSlaterANDDegen}

\begin{theorem}
\label{thm:LPdegen}
Suppose that strict feasibility fails for $\cF$ .
Then every basic feasible solution to $\cF$ is degenerate.
\end{theorem}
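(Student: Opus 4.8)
The plan is to combine the Farkas alternative of \Cref{prop:Farkas} with the rank deficiency established in \Cref{lemma:ConstrRedundant}. Since strict feasibility of $\cF$ fails, \Cref{prop:Farkas} produces a nonzero $y \in \Rm$ whose exposing vector $z = A^Ty$ satisfies $z \in \Rnp$, $z \neq 0$, and $\langle b, y\rangle = 0$. Write $\cZ = \{i : z_i > 0\}$ for its support (so $|\cZ| = s_z \ge 1$) and $\cN = \{1,\dots,n\}\setminus \cZ$ for the complementary index set. The first observation, already recorded in \cref{eq:setEqauivalence}, is that every $x \in \cF$ satisfies $x_i z_i = 0$ for all $i$, hence $x_i = 0$ for every $i \in \cZ$; equivalently, the support of any feasible point is contained in $\cN$.

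Next I would translate \Cref{lemma:ConstrRedundant} into a statement about column rank. The facial range vector $V$ has the standard unit vectors $e_{s_j}$, $s_j \in \cN$, as its columns, so $AV = A(:,\cN)$. \Cref{lemma:ConstrRedundant} says the rows of $AV$ are linearly dependent, i.e.\ $\rank(A(:,\cN)) \le m-1$.

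Then take an arbitrary basic feasible solution $x \in \cF$ with basis $\cB$, $|\cB| = m$. The matrix $A(:,\cB)$ is nonsingular, so its $m$ columns are linearly independent and, in particular, cannot all lie in the column space of $A(:,\cN)$, which has dimension at most $m-1$. Therefore $\cB \not\subseteq \cN$, so there is an index $i \in \cB \cap \cZ$. Since $x \in \cF$ and $z_i > 0$, the first observation gives $x_i = 0$ with $i \in \cB$, which is exactly the definition of a degenerate basic feasible solution. As $x$ was arbitrary, every basic feasible solution is degenerate.

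There are essentially no calculations to grind through; the only thing to get right is the bookkeeping that identifies $AV$ with the column submatrix $A(:,\cN)$ and the elementary linear-algebra step that $m$ columns of $A$ indexed by a set $\cB \subseteq \cN$ would be dependent once $\rank(A(:,\cN)) < m$. A completely parallel geometric argument is available: a basic feasible solution is an extreme point of $\cF$, hence corresponds via $x = Vv$ to an extreme point $v$ of $\cF_r = \{v \ge 0 : AVv = b\}$; since $\rank(AV) \le m-1$, such a $v$ has at most $m-1$ positive coordinates, so $x$ has at most $m-1$ positive coordinates and cannot be a nondegenerate basic feasible solution, which would need exactly $m$. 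I expect the paper to present both versions, with \Cref{lemma:ConstrRedundant} serving as the common engine.
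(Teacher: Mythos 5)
Your proof is correct and matches the paper's algebraic argument essentially verbatim: both invoke \Cref{lemma:ConstrRedundant} to get $\rank(AV) \le m-1$, then observe that a nonsingular $m\times m$ basis submatrix $A(:,\cB)$ cannot have all its columns drawn from the rank-deficient $A(:,\cN) = AV$, forcing a basic index in the zero-forced set and hence degeneracy. Your closing sketch of the extreme-point route is likewise the paper's second (geometric) proof, so you have anticipated both versions.
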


\subsubsection{An Algebraic Proof of~\Cref{thm:LPdegen} via the Definition of \BFS}
\label{sect:algproof}

\begin{proof}
Since there is no strictly feasible point in $\cF$, there exists a facial range vector $V$, and as in~\cref{eq:setEqauivalence} we have
\[
\cF = \{ \ x \  = Vv  \in \Rn  \ : \ AVv = b, \ v\in \R^{n-s_z}_+  \ \}.
\]
By \Cref{lemma:ConstrRedundant}, $AV$ has at least one redundant row.
By permuting the columns of $A$, we may assume that the matrix $V$ is of the form 
\[
V = \begin{bmatrix} I_{r}  \\ 0  \end{bmatrix}  \text{ and } r = n-s_z.
\]
We partition the index set $\{1,\ldots,n\}$ as
\[
\{1,\ldots,n\}= \cI_+ \cup \cI_0, \text{ where } \cI_+ = \{1, \ldots, r\}  \text{ and } \cI_0 = \{ r+1,\ldots ,n\} .
\]
Then we have $A = \begin{bmatrix} A_{\cI_+} & A_{\cI_0} \end{bmatrix}$.
Let $\bar{x} \in \cF$ be a  basic feasible solution with basic indices
\[
\cB \subset \{1,\ldots,n\}, \ |\cB|=m, \ \det( A_\cB) \neq 0, \ \text{
and } \ A_\cB \bar x (\cB) = b .
\]
Suppose $\cB\subseteq \cI_+$.
We note, by \Cref{lemma:ConstrRedundant} again, that $A_{\cI_+}=
AV$ has linearly dependent rows, i.e., $\rank(A_{\cI_+}) < m$. 
Hence $\bar{x}$ must include a basic variable in $\cI_0$ and this concludes that every basic feasible solution is degenerate.
\end{proof}

\subsubsection{A Geometric Proof Using Extreme Points}
\label{sec:geometricProof}

We now give the second proof of our main result.
Suppose that $X\in F$ with $\rank(X)=r$, where $F$ is a face of the set
$\{X \in \Snp: \trace(A_iX)=b_i, \forall i=1 ,\ldots, m\}$. Here, $\Snp$ denotes the set of $n$-by-$n$ positive semidefinite matrices. 
It is known that 
$ \frac{ r(r+1)}{2} \le m+\dim F$, see~\cite[Theorem 2.1]{GPat:95}.
We rewrite \cite[Theorem 2.1]{GPat:95}
in the language of polyhederon in \Cref{coro:PatakiLPversion}.   
We include the proof for completeness in \Cref{sec:appendixLPataki}.

\begin{cor}(\!\!\cite[Theorem 2.1]{GPat:95})
\label{coro:PatakiLPversion}
Suppose that $x\in F$, where $F$ is a face of the set $\cF$.
Let $r$ be the number of nonzeros in $x$ and $d = \dim F$.
Then the number of nonzero entries of $x \in F$ is at most $ m+d$.
\end{cor}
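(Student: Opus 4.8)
The plan is to mimic the argument of \cite[Theorem 2.1]{GPat:95} for spectrahedra, but in the much simpler polyhedral setting where ``rank'' is replaced by ``number of nonzero coordinates.'' Let $x \in F$ with $F$ a face of $\cF$, and let $r$ be the number of nonzero entries of $x$. Write $\supp(x) = \{i : x_i \neq 0\}$, so $|\supp(x)| = r$. Since $x$ lies in the relative interior of $F$ (one may pass to the minimal face of $\cF$ containing $x$, which is contained in $F$ and has dimension at most $\dim F$; so it suffices to prove the bound with $d$ the dimension of that minimal face), every feasible direction supported on $\supp(x)$ is actually a feasible direction inside $F$. More precisely, I would show: if $h \in \R^n$ satisfies $Ah = 0$ and $\supp(h) \subseteq \supp(x)$, then for small $\varepsilon > 0$ both $x \pm \varepsilon h$ remain in $\cF$, hence in $F$, so $h \in \spann(F - x)$.

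The key step is then a dimension count. Let $W = \{h \in \R^n : Ah = 0,\ \supp(h) \subseteq \supp(x)\}$. Identifying $\R^{\supp(x)}$ with $\R^r$, this is the nullspace of the $m \times r$ submatrix $A(:,\supp(x))$, so $\dim W \ge r - m$. By the previous paragraph, $W \subseteq \spann(F - x)$, and since the latter is the linear span of the affine hull of $F$ translated to the origin, $\dim W \le \dim F = d$. Combining, $r - m \le d$, i.e., $r \le m + d$, which is exactly the claimed bound. (Note this is the direct analogue of $\tfrac{r(r+1)}{2} \le m + d$: the dimension of the space of symmetric matrices supported on the range of $X$ is $\tfrac{r(r+1)}{2}$, and here the space of vectors supported on $\supp(x)$ has dimension $r$.)

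I expect the only real subtlety to be the claim that feasible directions supported on $\supp(x)$ stay inside the face $F$, which is where the passage to the minimal face (equivalently, the fact that $x$ can be taken in the relative interior of $F$) does the work: if $x \in \ri F$ then $x \pm \varepsilon h \in F$ for small $\varepsilon$ whenever $x \pm \varepsilon h \in \cF$, because $F$ is a face and a face containing a relative-interior point together with points on both sides of it along a segment must contain that whole segment's span locally. The nonnegativity bookkeeping is routine: on coordinates $i \notin \supp(x)$ we have $x_i = 0$ and $h_i = 0$, so $(x \pm \varepsilon h)_i = 0 \ge 0$; on coordinates $i \in \supp(x)$ we have $x_i > 0$ (or $x_i < 0$ is impossible since $x \ge 0$; here $x_i > 0$), so for $\varepsilon$ small enough $(x \pm \varepsilon h)_i$ retains its sign. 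The equality $A(x \pm \varepsilon h) = b$ is immediate from $Ah = 0$. With these pieces assembled, the corollary follows; note that it is stated for the $d$ attached to the given face $F$, and since the minimal face containing $x$ has dimension $\le \dim F$, proving it for the minimal face suffices and in fact gives a slightly sharper statement.
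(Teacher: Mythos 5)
Your proof is correct and essentially the same as the paper's: both hinge on the dimension of the nullspace of the submatrix $A(:,\supp(x))$ and on the face property to push perturbations $x\pm\varepsilon h$ into $F$; the paper phrases it as a contradiction (producing $d+2$ affinely independent points in $F$) while you phrase it as the direct inequality $r-m\le\dim W\le\dim F$, which is a cosmetic reorganization. One simplification worth noting: the passage to the minimal face and the appeal to $x\in\ri F$ are unnecessary, since the paper's definition of face already gives exactly what you need — once $x\pm\varepsilon h\in\cF$ and $x=\tfrac12\bigl((x+\varepsilon h)+(x-\varepsilon h)\bigr)\in F$, the midpoint property of the face $F$ forces $x\pm\varepsilon h\in F$ with no relative-interior hypothesis.
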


\index{face}
A point $x$ in a convex set $\cC$ is called an \textdef{extreme point} if, for all $y,z\in \cC$, $x = \frac{1}{2}(y+z)$ implies $x = y = z$.
An extreme point is itself a face and the dimension of this face is $0$.
Hence, we obtain \Cref{coro:PatakiLPextpt}
by writing \Cref{coro:PatakiLPversion} through the lens of extreme points.
\begin{cor}
\label{coro:PatakiLPextpt}
Every extreme point $x \in \cF$ has at most $m$ positive entries. 
\end{cor}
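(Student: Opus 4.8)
The plan is to obtain this immediately from \Cref{coro:PatakiLPversion} by choosing the face $F$ to be the singleton $\{x\}$. First I would verify that $\{x\}$ is indeed a face of $\cF$ whenever $x$ is an extreme point: this is essentially the definition of extremality. If $y,z\in\cF$ satisfy $x=\tfrac12(y+z)$, then extremality forces $y=z=x$, so $y,z\in\{x\}$, which is precisely the defining property of a face. Hence $\{x\}\unlhd\cF$ and \Cref{coro:PatakiLPversion} applies with $F=\{x\}$.

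Next I would do the (trivial) dimension bookkeeping. The affine hull of a single point is that point itself, so $d=\dim\{x\}=0$. Moreover, since every point of $\cF$ satisfies $x\ge 0$, the number of nonzero entries of $x$ equals the number of positive entries of $x$; call this $r$. Then \Cref{coro:PatakiLPversion} gives $r\le m+d=m$, which is exactly the claimed bound.

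There is essentially no obstacle here; the only points worth stating explicitly are the bookkeeping conventions — that a single point is a $0$-dimensional face, and that ``nonzero'' and ``positive'' coincide on $\cF$ — so that the one-line deduction is airtight. If one preferred not to cite \Cref{coro:PatakiLPversion} as a black box, the same argument specializes directly: assuming $x$ has $r>m$ positive coordinates, the reduced matrix $\bar{A}=A(:,\supp(x))\in\R^{m\times r}$ has a nontrivial kernel, so for some $0\neq v\in\ker\bar{A}$ and all sufficiently small $\epsilon$ the perturbations $\bar{x}\pm\epsilon v$ stay positive; lifting back to $\Rn$ by zero-padding yields two distinct points of $\cF$ whose midpoint is $x$, contradicting extremality.
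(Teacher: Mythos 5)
Your proof is correct and matches the paper's approach exactly: the paper likewise observes that an extreme point is a zero-dimensional face and then applies \Cref{coro:PatakiLPversion} with $d=0$ to conclude $r\le m$. Your additional direct argument at the end is also sound but is essentially a re-specialization of the proof of \Cref{coro:PatakiLPversion} rather than a distinct route.
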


We now restate the main result of this paper~\Cref{thm:LPdegen}
in the language of extreme points and number of rows of $A$.
\begin{thm}
\label{thm:degeneracyPataki}
Suppose that strict feasibility of $\cF$ fails.
Then every extreme point $x \in \cF$ has at most $m-1$ positive entries.
\end{thm}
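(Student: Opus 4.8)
The plan is to combine the two ingredients already assembled in the paper: the Pataki-type inequality of \Cref{coro:PatakiLPversion} (and its extreme-point specialization \Cref{coro:PatakiLPextpt}) together with the redundancy statement of \Cref{lemma:ConstrRedundant}. The key observation is that when strict feasibility fails, the facially reduced constraint matrix $AV$ has rank strictly less than $m$, so effectively the feasible set $\cF$ lives inside an affine space cut out by fewer than $m$ independent equality constraints. The conclusion should then follow by rerunning the Pataki counting argument with the \emph{true} number of independent constraints in place of $m$.

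Concretely, here is how I would carry it out. First, since strict feasibility of $\cF$ fails, \Cref{prop:Farkas} gives a facial range vector $V \in \R^{n\times(n-s_z)}$ with $\cF = \{x = Vv : AVv = b,\ v \in \R^{n-s_z}_+\}$, exactly as in \cref{eq:setEqauivalence}. By \Cref{lemma:ConstrRedundant}, the rows of $AV$ are linearly dependent, so $\bar m := \rank(AV) \le m-1$. Passing to the full-row-rank reformulation \cref{eq:setEqauivalencefullrow}, we have $\cF = \{x = Vv : P_{\bar m}AVv = P_{\bar m}b,\ v \in \R^{n-s_z}_+\}$, where $P_{\bar m}AV$ has full row rank $\bar m$. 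Now let $x \in \cF$ be an extreme point. Writing $x = Vv$, the point $v$ is an extreme point of $\cF_r = \{v : P_{\bar m}AVv = P_{\bar m}b,\ v\ge 0\}$ (since $V$ is injective on $\R^{n-s_z}$, extreme points correspond), and $x$ and $v$ have the same number of positive entries because $V$ merely inserts zero coordinates. Applying \Cref{coro:PatakiLPextpt} to the system $\cF_r$, whose constraint matrix has $\bar m$ rows, we get that $v$ — and hence $x$ — has at most $\bar m \le m-1$ positive entries.

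An alternative, essentially equivalent route that avoids invoking the second facial reduction step explicitly: apply \Cref{coro:PatakiLPversion} directly with $F = \{x\}$ an extreme point, so $d = 0$, but replace the generic bound $m$ by the rank of the relevant constraint system. Inspecting the proof of \Cref{coro:PatakiLPversion}, the quantity that actually matters is $\rank(\bar A)$ where $\bar A = A(:,\supp(x))$; since $\supp(x)$ is contained in the support of the facial range vector's complement (the positive entries of the exposing vector $z$ force the corresponding coordinates of $x$ to vanish), the columns $A(:,\supp(x))$ all lie in the range of $AV$, so $\rank(\bar A) \le \rank(AV) = \bar m \le m-1$. The contradiction argument of that proof then shows $r \le \rank(\bar A) \le m-1$.

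I expect the main obstacle to be purely bookkeeping rather than conceptual: one must be careful to verify that extreme points of $\cF$ correspond bijectively to extreme points of the reduced feasible set $\cF_r$ under the identification $x = Vv$, and that the support count is preserved under this correspondence. This is where the injectivity of $V$ and the fact that $V$ is a $0/1$ column-selection matrix are used. A secondary point to state cleanly is that $\bar m \le m-1$ follows from \Cref{lemma:ConstrRedundant} — this is immediate, but it is the crucial place where the hypothesis ``strict feasibility fails'' enters, so it deserves an explicit sentence. Beyond that, the argument is a direct application of results already proved in the excerpt.
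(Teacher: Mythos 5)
Your first route is exactly the paper's proof: facially reduce via $V$, use \Cref{lemma:ConstrRedundant} to get $\bar m := \rank(AV) \le m-1$, pass to the full-row-rank system \cref{eq:setEqauivalencefullrow}, and apply \Cref{coro:PatakiLPextpt} to the reduced system. Your explicit check that the column-selection matrix $V$ gives a support-preserving bijection between extreme points of $\cF$ and of $\cF_r$ fills in a step the paper compresses into ``Hence, the statement follows,'' and your second (rank-of-$\bar A$) route is a correct, essentially equivalent variant.
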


\begin{proof}
Since strict feasibility fails for $\cF$, we have $\cF = \{x=Vv \in \Rn : AVv = b, \ v\in \R_+^{n-s_z}\}$; see \cref{eq:setEqauivalence}.
From \Cref{lemma:ConstrRedundant}, we note that at least one equality in $AVv=b$ is redundant. 
Let $P_{\bar{m}} AV v = P_{\bar{m}} b$ be the system obtained after removing redundant rows of $AV$; see \cref{eq:setEqauivalencefullrow}.
Then, by \Cref{coro:PatakiLPextpt},
every extreme point of the set $\{v \in \R^{n-s_z}_+: P_{\bar{m}} AV v = P_{\bar{m}} b \}$  has at most $m-1$ nonzero entries. 
Hence, the statement follows.
\end{proof}

\index{$P_{\bar{m}}AV$}
\index{$P_{\bar{m}}b$}

\subsubsection{Immediate Consequences of Main Result}

%
We first note that
\Cref{thm:LPdegen} and \Cref{thm:degeneracyPataki} are equivalent owing
to the well-known characterization:
\[
x \in \cF \text{ is a basic feasible solution } 
\iff 
x \in \cF \text{ is an extreme point. } 
\]
We now highlight that \Cref{thm:LPdegen,thm:degeneracyPataki} do
not merely imply the existence of a \emph{single} degenerate basic
feasible solution; but rather that \emph{every} 
basic feasible solution is degenerate. 
Developing a pivot rule that prevents the simplex method from visiting
degenerate points is not possible as it can never avoid degeneracies
when strict feasibility fails, as we now illustrate in the following.
\begin{example}
\label{example:noIntDegen}
Consider $\cF$ with the data
\[
A = \begin{bmatrix}
1 & 1 & 3 & 5 & 2 \\ 0 & 1 & 2 & -2 & 2
\end{bmatrix} \text{ and  }
b = \begin{pmatrix}  1 \\ 1 \end{pmatrix} .
\]
Consider the vector 
$y = \begin{pmatrix} 1 \\ -1 \end{pmatrix}$. Then
\[
A^T y = \begin{pmatrix} 1 & 0 & 1 & 7 & 0  \end{pmatrix}^T \text{ and }
b^Ty = 0.
\]
Hence, \Cref{prop:Farkas} certifies that $\cF$ does not contain a strictly feasible point.
There are exactly six feasible bases in $\cF$.
The \BFS associated with $\cB \in \{\{1,2\},\{2,3\},\{2,4\}\}$ is
$ x = \begin{pmatrix} 0 & 1 & 0 & 0 & 0  \end{pmatrix}^T$; and
the \BFS associated with $\cB \in \left\{
\{1,5\},\{3,5\},\{4,5\}\right\}$ is
$ x = \begin{pmatrix} 0 & 0 & 0 & 0 & \frac{1}{2}  \end{pmatrix}^T$.
Clearly, all \BFSp s are degenerate. 
\end{example}

Recall that strict feasibility is equivalent to the
Mangasarian-Fromovitz constraint qualification,~\cite{MR0325154}. 
The latter is equivalent to
stability with respect to perturbations of $b$, and to a compact dual
optimal set.
Therefore, the following \Cref{coro:contraPosLPdegen}, obtained by writing the contrapositive 
of \Cref{thm:LPdegen}, is extremely interesting and important.
We provide \Cref{example:oneDegen} below to illustrate 
\Cref{coro:contraPosLPdegen}.
\begin{cor}
\label{coro:contraPosLPdegen}
Suppose that there exists a nondegenerate basic feasible solution. Then
there exists a strictly feasible point $\hat x \in \cF$.
\end{cor}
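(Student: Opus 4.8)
The plan is to recognize that \Cref{coro:contraPosLPdegen} is nothing but the contrapositive of \Cref{thm:LPdegen}, so the proof is a single line. If we abbreviate by $P$ the statement ``$\cF$ contains a strictly feasible point'' and by $Q$ the statement ``every basic feasible solution of $\cF$ is degenerate,'' then \Cref{thm:LPdegen} reads $\neg P \Rightarrow Q$. Its contrapositive $\neg Q \Rightarrow P$ says exactly that if some basic feasible solution is nondegenerate then $\cF$ has a Slater point, which is the assertion to be proved.

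For completeness I would also record a short self-contained derivation that reuses the facial reduction ingredients of \Cref{sec:FacialReduction} rather than quoting \Cref{thm:LPdegen} verbatim. Let $\bar x \in \cF$ be a nondegenerate basic feasible solution with basis $\cB$, so that $\bar x_i > 0$ for every $i \in \cB$. If strict feasibility failed, \Cref{prop:Farkas} would produce a nonzero $z = A^T y \in \R^n_+$ with $\langle b, y\rangle = 0$; the exposing-vector computation $\langle z, \bar x\rangle = \langle y, A\bar x\rangle = \langle y, b\rangle = 0$ together with $z, \bar x \ge 0$ forces $z_i \bar x_i = 0$ for all $i$, hence $z_i = 0$ for every $i \in \cB$. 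Equivalently, $\cB$ is disjoint from $\supp(z)$, so the columns $A(:,\cB)$ all appear among the columns of the facially reduced matrix $AV$. But $AV$ has a redundant row by \Cref{lemma:ConstrRedundant}, hence $\rank AV < m$, so any $m$ of its columns are linearly dependent, contradicting the nonsingularity of $A(:,\cB)$. Even more quickly, one may simply note that \Cref{thm:degeneracyPataki} caps the number of positive entries of every extreme point at $m-1$ once strict feasibility fails, while the extreme point $\bar x$ has exactly $m$.

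There is essentially no obstacle to overcome here; the only point meriting a word is the standard identification of a nondegenerate basic feasible solution with an extreme point having exactly $m$ positive entries, which is already available from \Cref{sec:Background} and \Cref{coro:PatakiLPextpt}. In the final write-up I would present just the one-line contrapositive argument, relegating the alternative derivations above to a parenthetical remark and pairing the statement with the illustrative example that follows.
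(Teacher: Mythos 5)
Your proposal is correct and matches the paper exactly: the paper introduces \Cref{coro:contraPosLPdegen} with the sentence ``we obtain an interesting statement by writing the contrapositive of \Cref{thm:LPdegen}'' and offers no further proof, which is precisely your one-line argument. The additional self-contained derivations you sketch (via the exposing vector and via \Cref{thm:degeneracyPataki}) are sound but redundant for the paper's purposes.
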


\begin{example}
\label{example:oneDegen}
Consider $\cF$ with the data
\[
A = \begin{bmatrix}
1 & 0 & -2 & 3 & -4 \\ 0 & -1 & -2 & 3 & 1
\end{bmatrix} \text{ and  }
b = \begin{pmatrix}  1 \\ 1 \end{pmatrix} .
\]
The system $\cF$ has exactly four feasible bases; the \BFS associated with $\cB \in \{ \{1,4\},\{2,4\},\{4,5\} \}$ is $x = \begin{pmatrix} 0&0&0&1/3&0\end{pmatrix}^T$
and the \BFS associated with $\cB = \{1,5\}$ is 
$x = \begin{pmatrix} 5&0&0&0&1\end{pmatrix}^T$.
We note that the \BFS associated with $\cB = \{1,5\}$ is nondegenerate.
As \Cref{coro:contraPosLPdegen} states, the system $\cF$ has a strictly feasible point, and it is verified by the point  
$\frac{1}{10} \begin{pmatrix} 4 & 1 & 1 & 4 & 1\end{pmatrix}^T$.
\end{example}
\Cref{coro:contraPosLPdegen} provides a useful check for strict
feasibility when the simplex method is used,
i.e.,~if there is any simplex iteration that yields a nondegenerate
\BFSp, then it is useful to record that occurrence.
We emphasize that recording the occurrence of a nondegenerate iteration
is inexpensive and the occurrence gives a certificate of the stability
of the \LP instance. 
We revisit \Cref{coro:contraPosLPdegen} in \Cref{sec:nondegentoSlater}
below and present an efficient algorithm for obtaining a Slater point
from a nongenerate \BFSp. But,
\Cref{example:converseCounterex}  below shows that the converse of 
\Cref{thm:LPdegen,thm:degeneracyPataki} is not true.
In other words, strict feasibility holds and every \BFS is degenerate.
\begin{example}
\label{example:converseCounterex}
\begin{enumerate}
\item
Consider $\cF$ with the data
\[
A = \begin{bmatrix}
1 & 0 & 2 & 0 & -2 \\ 1 & -3 & 2 & 1 & -2
\end{bmatrix} \text{ and  }
b = \begin{pmatrix}  1 \\ 1 \end{pmatrix} .
\]
$\cF$ has exactly four feasible bases and all of them are degenerate;
the \BFS associated with $\cB \in  \{\{1,2\},\{1,4\}\}$ is $x = \begin{pmatrix} 1 & 0 & 0 & 0 & 0 \end{pmatrix}^T$ and the 
\BFS associated with $\cB \in  \{\{2,3\},\{3,4\}\}$ is $x = \begin{pmatrix} 0 & 0 & 1/2 & 0 & 0 \end{pmatrix}^T$.
However, $\cF$ contains a strictly feasible point 
$\frac{1}{10} \begin{pmatrix} 1&1&5.5&3&1 \end{pmatrix}^T$.

\item 
Note that the linear assignment problem (marriage problem) has a
strictly feasible point but all the \BFS are highly degenerate\footnote{Note that this is true for the
transportation and the assignment problems. Both are highly degenerate
at each \BFS but satisfy strict feasibility. For example, for the
assignment problem order $n$, 
the feasible set can be considered to be the doubly
stochastic matrices $X$. The extreme points are the permutation matrices by
the Birkoff-Von Neumann theorem. Therefore, each extreme point has
exactly $n$ positive elements while there are $m=2n-1$ linearly
independent constraints.}.
Therefore, $\cI_0=\emptyset$; the set of variables fixed at $0$ is empty.
Moreover, as an \LPp, the problem is stable with respect to
perturbations in the data.
\end{enumerate}
\end{example}

From \Cref{example:noIntDegen,example:converseCounterex}, we observe 
that there are two different types of degeneracies. One involves
variables that are $0$ in one \BFS but positive in another; the second
involves variables fixed at $0$, i.e.,~that result in strict feasibility
failing. Note that strict feasibility (along with $A$ full row rank) 
is the Mangasarian-Fromovitz constraint qualification 
which is equivalent to stability with respect to 
right-hand side perturbations~\cite{MR1358394}, which is in turn
equivalent to a bounded dual optimal set.


Given a \BFS $\bar{x} \in \cF$, we let the 
\textdef{degree of degeneracy} of $\bar{x}$ denote the 
number of $0$'s among its basic variables.
By exploiting the facially reduced model we can check how degenerate the \BFSp s of $\cF$ are.

\index{$\cI_0$}

\begin{cor}
\label{cor:degrdegAidentz}
Suppose that strict feasibility fails for $\cF$, and let $\cF$ have the
facial range vector representation in~\cref{eq:setEqauivalence}.
Recall that the set of indices $\cI_0 = \{ i \in \{1,\ldots, n \} : x_i = 0, \ \forall x\in \cF \}$. 
Let $\bar{x}\in \cF$ be a basic feasible solution with basis $\cB$.
Then, the following holds.
\begin{enumerate}
\item \label{item:degDegen3}
The basis $\cB$ has an nonempty intersection with $\cI_0$, i.e.,
$\cB\cap \cI_0 \neq \emptyset$. 
\item 
\label{item:singletondeg}
If the degree of degeneracy of $\bar x$ is exactly one, with
$\bar x_k = 0, k\in \cB$,  then $x_k,A_{:,k}$ can be discarded from the
problem.
\item \label{item:degDegen1}
The degree of degeneracy of $\bar{x}$ is at least $m-\rank(AV)$.
\item \label{item:degDegen2}
At least $m-\rank(AV)$ number of basic indices of $\bar{x}$ are contained in $\cI_0$.
\end{enumerate}
\end{cor}

\begin{proof}
\begin{enumerate}
\item
Let $\bar{x} \in \cF$ be a basic feasible solution and let $\cB$ be a basis for $\bar{x}$. 
\Cref{item:degDegen3} follows from the proof and the definition of the set $\cI_0$ of elements $x_i$ that are identically zero on the feasible set.
\item
The proof follows from the algebraic proof of \Cref{thm:LPdegen} given
in~\Cref{sect:algproof}.
Since every \BFS is degenerate and the basis has a nonempty intersection with $\cI_0$, the index $k$  must be in $\cI_0$.

\item
For \Cref{item:degDegen1}, we note that $A_\cB $ contains linearly independent columns. 
Then $A_\cB $ can contain at most $\rank(AV)$  number of columns from $AV$.
Thus, $\bar{x}(\cB)$ must contain at least $m-\rank(AV)$ number of zeros.
\item
\Cref{item:degDegen2} is a direct consequence of \Cref{item:degDegen3} and \Cref{item:degDegen1}.
\end{enumerate}
\end{proof}
\Cref{item:degDegen1,item:degDegen2} of \Cref{cor:degrdegAidentz} are closely related to the implicit problem singularity, $\ips$, and the max-singularity degree, $\maxsd$; see \Cref{def:ips}.
In particular, $\ips(\cF)$ is a lower bound of the degree of degeneracy of every \BFS of $\cF$; the more implicit redundancies $\cF$ contains, the more degenerate every \BFS becomes.
We include an alternative way to view \Cref{cor:degrdegAidentz}
in \Cref{sec:geometricProof}.

\index{$\ips$}
\index{$\maxsd$}


We conclude the discussions with the following interesting observation.
This again illustrates the implicit singularity of the constraints when
the Slater condition fails.
\begin{cor}
Suppose that strict feasibility fails for $\cF$ and that $m=1$. Then the
trivial $x^*=0$ is an optimal solution.
\end{cor}

\subsection{Efficient Preprocessing for Facial Reduction and Strict Feasibility}
\label{sec:phase1partB}

In this section we present an efficient preprocessing method for obtaining a facially reduced system. 
In \Cref{sec:nondegentoSlater} we discuss obtaining a strictly feasible 
point using a nondegeneate \BFS and its variant.
In \Cref{sec:partBofPhaseI} we consider the general case of finding an
exposing vector to obtain the facially reduced strictly feasible \LPp.

\subsubsection{Towards a Strictly Feasible Point from a Nondegenerate \BFS}
\label{sec:nondegentoSlater}

By \Cref{coro:contraPosLPdegen}, 
the existence\footnote{Determining the existence of a degenerate basic feasible solution is an NP-complete problem; see \cite{MR668279}.} of a nondegenerate \BFS guarantees the existence of a strictly feasible point.  
We now propose a process for acquiring a Slater point from a
nondegenerate \BFSp, and include a generalization.
The arguments in this section also provide a constructive proof of \Cref{coro:contraPosLPdegen}.

Let $\bar{x}\in \cF$ be a nondegenerate \BFSp.
Without loss of generality, we assume that the (all positive) basic variables $\bar{x}_\cB$ of $\bar{x}$ are located at the last  $m$ entries of $\bar{x}$.
We fix a scalar $\hat{\gamma} \in (0,1)$ and an index $j\in \{1,\ldots,
n-m\}$. For some $\alpha \ge 0$, we consider the 
simplex method \emph{ratio test} type inequality
\begin{equation}
\label{eq:basicGivingAway}
\hat{\gamma} \bar{x}_\cB  - \alpha( A_\cB )^{-1} A_j \ge 0.
\end{equation}
Since $\bar{x}_\cB >0,\hat \gamma > 0$, there exists a \emph{positive} 
$\alpha$ that maintains the inequality \cref{eq:basicGivingAway}.
Let
\begin{equation}
\label{eq:steplengthSlater}
\alpha^* 
= \min\left\{ 1, \ \max\{ \alpha \in \R_+ : \hat{\gamma} \bar{x}_\cB  - \alpha( A_\cB )^{-1} A_j \ge 0 \} \right\},
\end{equation}
and decompose 
\[
\hat{\gamma} \bar{x}_\cB = 
\left( \hat{\gamma} \bar{x}_\cB  - \alpha^*( A_\cB )^{-1} A_j \right)
+ \alpha^*( A_\cB )^{-1} A_j.
\]
We observe that
\[
\begin{array}{rcl}
b &=& A_{\cB} \bar{x}_\cB \\
&=&(1-\hat{\gamma} ) A_{\cB} \bar{x}_\cB  + \hat{\gamma} A_{\cB} \bar{x}_\cB \\
&=& (1-\hat{\gamma} ) A_{\cB} \bar{x}_\cB  
+ A_\cB \left( \hat{\gamma} \bar{x}_\cB  - \alpha^*( A_\cB )^{-1} A_j
+ \alpha^*( A_\cB )^{-1} A_j \right) \\
&=& A_\cB (\bar{x}_\cB - \alpha^*( A_\cB )^{-1} A_j )  + \alpha^* A_j .
\end{array}
\]
If we set $x_j = \alpha^* >0$ and replace $\bar{x}_\cB$ by $\bar{x}_\cB - \alpha^*( A_\cB )^{-1} A_j $, then we have increased the cardinality of the positive entries of a solution. 
We note that $\bar{x}_\cB - \alpha^*( A_\cB )^{-1} A_j $ only has
strictly positive entries since it it a sum of a positive vector and a nonnegative vector;
\[
\bar{x}_\cB - \alpha^*( A_\cB )^{-1} A_j 
= \underbrace{(1-\hat{\gamma} ) \bar{x}_\cB }_{\text{positive}}+ 
\underbrace{ \hat{\gamma} \bar{x}_\cB  - \alpha^*( A_\cB )^{-1} A_j }_{\text{nonnegative}}.
\]

We can continue to increase the number of positive entries of a solution 
one by one for each $j\in \{1,\ldots,n-m\}$.
Moreover, we can achieve this by a compact vectorized operation.
The main idea is that we can choose $\hat{\gamma}$ in \cref{eq:basicGivingAway} independently for each $j \in \{1,\ldots,n-m\}$.
Let $\gamma_j$ be a positive real number such that
$0< \gamma := \sum_{j=1}^{n-m} \gamma_j <1$. Then, we have
\[
\bar{x}_\cB = 
(1-\gamma) \bar{x}_\cB  + \gamma \bar{x}_\cB =
(1-\gamma)  \bar{x}_{\cB}+ \sum_{j=1}^{n-m} \gamma_{j} \bar{x}_\cB .
\]
We set an auxiliary matrix 
\[
\Theta =  \begin{bmatrix} \gamma_1 \bar{x}_\cB & \cdots & \gamma_{n-m} \bar{x}_\cB \end{bmatrix}
 -(A_\cB)^{-1} A_{1:n-m}   \in \R^{m \times (n-m)}
\]
and perform \cref{eq:steplengthSlater} on each column $j$ of $\Theta$ to obtain the vector $\theta^*$:
\[
\theta^*_j := 
\begin{cases}
\max(\Theta(:,j)) & \text{ if } \max(\Theta(:,j))\le 1, \\
1 & \text{ otherwise}.
\end{cases}
\]
Then the point 
\[
\begin{bmatrix}
\theta^* \\ \bar{x}_\cB - (A_\cB)^{-1} A_{1:n-m} \theta^*
\end{bmatrix} 
\]  
is a strictly feasible point to $\cF$. 
Hence, this operation provides a constructive proof of \Cref{coro:contraPosLPdegen}.

We now extend the aforementioned procedure for obtaining a strictly 
feasible point using any feasible solution $\bar{x}\in \cF$ such that
$A_{\supp(\bar{x})}$ is full row rank.  We partition $\bar{x}\in 
\cF$ as follows
\begin{equation}
\label{eq:partitionxbar}
\bar x=\begin{pmatrix} \bar x_{\cB_1} \\
\bar x_{\cB_2} \\ \bar x_\cN \end{pmatrix},\,\text{ where }
\supp(\bar{x}) =  \cB_1 \cup \cB_2, \ \rank(A_{\cB_1)} =m , \ \text{ and } 
\cN = \{1,\ldots,n\} \setminus \supp(\bar{x}).
\end{equation}
We partition $A$ using the same partition $\cB_1\cup \cB_2 \cup \cN$:
\[
\begin{bmatrix} A_{\cB_1} & A_{\cB_2} &  A_\cN \end{bmatrix} \bar{x} = b 
\iff
\begin{bmatrix} A_{\cB_1} &  A_\cN \end{bmatrix}
\begin{pmatrix} \bar{x}_{\cB_1} \\ \bar{x}_\cN \end{pmatrix} = \bar{b} := b -  A_{\cB_2}  x_{\cB_2}  .
\]
Then we can apply the aforementioned procedure to the system 
\[
\begin{bmatrix} A_{\cB_1} &  A_\cN \end{bmatrix}
\begin{pmatrix} \bar{x}_{\cB_1} \\ \bar{x}_\cN \end{pmatrix} = \bar{b} 
\]
and distribute positive weights to $\bar{x}_\cN$ using $\bar{x}_{\cB_1}$.
Finally, we find a strictly feasible point to $\cF$.
This process is summarized in  \Cref{algo:Nondegen2Slater}.
Furthermore, \Cref{algo:Nondegen2Slater} provides a constructive proof for 
\Cref{prop:genenralBFStoSlater} below.
\begin{prop}
\label{prop:genenralBFStoSlater}
Let $x\in \cF$ be a solution such that $\rank \left( A_{\supp(x)}) \right)=m$.
Then, $\cF$ has a strictly feasible point. 
\end{prop}

\begin{algorithm}
\begin{algorithmic}[1]
\REQUIRE Given: $A, \ \bar{x}\in \cF$ partitioned as in~\cref{eq:partitionxbar}.
\STATE Choose any $\gamma \in \R^{|\cN|}_{++}$ such that $\sum_{j=1}^{|\cN|} \gamma_j <1$.
\STATE Compute 
\[
\Theta = \begin{bmatrix}
\bar{x}_{\cB_1} & \cdots & \bar{x}_{\cB_1}\end{bmatrix}  \Diag(\gamma) - A_{\cB_1}^{-1} A_\cN .
\]
\STATE Compute $\theta^* \in \R_{++}^{|\cN|} $, where for each $j \in \{1,\ldots,|\cN|\}$, 
\[
\theta^*_j := 
\begin{cases}
\max(\Theta(:,j)) & \text{ if } \max(\Theta(:,j))\le 1, \\
1 & \text{ otherwise}.
\end{cases}
\]
\STATE 
Set $x^\circ = \begin{pmatrix}
\bar{x}_{\cB_1} - (A_{\cB_1})^{-1} A_\cN \theta^* \\ \bar{x}_{\cB_2} \\  \theta^*
\end{pmatrix}$.
\end{algorithmic}
\caption{Compute a Slater Point}
\label{algo:Nondegen2Slater}
\end{algorithm}

\subsubsection{Exposing Vector; Phase I Part B; Strict Feasibility Testing}
\label{sec:partBofPhaseI}

We now present an efficient preprocessing procedure for detecting 
identically $0$ variables and obtaining exposing vectors in order
to get the facially reduced \LPp. We do this for
a given \BFS $\bar{x}$ by solving special subproblems using the 
simplex method. By the end of the process, we determine one of:
\begin{enumerate}
\item  a certificate $y$ that produces an exposing vector $A^Ty$ (Slater condition fails);
\item  a strictly feasible point (Slater condition holds).
\end{enumerate}

This process in fact has \emph{two} applications. 
First, since the only requirement of this process is the \BFSp, 
the procedure can be considered as an extension of phase-I of the 
two-phase simplex method that obtains the equivalent facially reduced problem.
Second, the procedure can be used as a postprocessing step. 
We could perform \FR on the optimal face and find, and delete, variables
fixed at zero in order to improve stability of the optimal solution.

\index{$\cI_0$}
\index{$\cB_0$}

We now describe the proposed preprocessing method.
Let $\cB$ be a degenerate initial basis of $\cF$ with associated \BFS
$\bar{x}$.
Without loss of generality, we assume that basic variables are located at the first $m$ entries of $\bar{x}$.
Let $d$ be the degree of degeneracy of $\bar{x}$.
We further assume that the degenerate basic variables are located at the first $d$ entries of $\bar{x}$.
We let $\cB_0 := \{1,\ldots, d\}$. We now test and record whether or not
each $i\in\cB_0$ is a variable fixed at $0$.
Let $i\in \cB_0$, and consider the following problem:
\begin{equation}
\label{eq:max_X1}
p^*_i = \max \{x_i \,:\, Ax=b, \, x\geq 0\}.
\end{equation}
We may assume that $i=1$.
We solve \cref{eq:max_X1} using the simplex method from the initial
\BFS $\bar{x}$. That is, we do not need to perform the typical phase-I of the two-phase simplex method in order to find a feasible \BFSp.
The optimal value $p_1^*$ of \cref{eq:max_X1} is clearly lower bounded by $0$.
We consider two cases below:
\begin{enumerate}
\item Suppose that $x_1>0$ after $k$ iterations.
Then, the variable $x_1$ is not an identically $0$ variable, 
i.e.,~we record that $1\in \cI_+$.
\item Suppose that $p_1^*=0$. 
Then, the variable $x_1$ is an identically $0$ variable, i.e.,~we record
that $1\in \cI_0$.
Let $\cB^*$ be an optimal basis for \cref{eq:max_X1}. Then we have
\begin{equation}
\label{eq:maxXidualfeasible}
y^* = A_{\cB^*}^{-T} e_1, \  \<b,y^*\>=0 \ \text{ and }
A^Ty^*  \ge e_1 ,
\end{equation}
where $e_1$ is the first unit vector of appropriate dimension.
We note that the dual optimal solution $y^*$ in
\cref{eq:maxXidualfeasible} produces a solution to the auxiliary
system~\cref{eq:auxsystem}.
Therefore, we obtain a \emph{nontrivial} exposing vector since $0\neq A^Ty^*\geq 0$.
\end{enumerate}

Let $\{y^j\}$ be a collection of the certificates that are obtained
from solving \cref{eq:max_X1} with the index~$1$ replaced by $i\in \cB_0$. Then $y^\circ = \sum_j y^j$ is also a
certificate, i.e.,
\[
A^T y^\circ = \sum_j A^T y^j \ge 0,\ 
A^Ty^\circ \ne 0, \ \text{ and } \
\<b,y^\circ\> = \sum_j \<b,y^j\> = 0,
\]
and we obtain a nontrivial exposing vector $A^T y^\circ$ for the system $\cF$.
By summarizing the two cases above, we obtain an efficient preprocessing method \Cref{algo:phase1partB}.
\begin{algorithm}
\begin{algorithmic}[1]
\REQUIRE A \BFS $\bar{x}$ with corresponding basis $\cB$; set 
$\cB_0 = \{ i\in \cB : \bar{x}_i =0 \}$
\STATE \textbf{Initialize:}
$x^\circ = \bar{x}$, $y^\circ = 0 \in \Rm$, $\cJ_0 = \emptyset$,
$\cB_*\leftarrow \cB_0$
\WHILE{ $\cB_0 \ne \emptyset$ and $\cB_* \ne \emptyset$}
\STATE  Pick $i \in \cB_0$; starting from the initial \BFS $\bar{x}$, solve 
for primal-dual optima $x^*,y^*$
\[
x^* = \argmax_x \{ x_i : Ax =b, x \ge 0 \}, \  \  p^* = x_i^* = b^Ty^*
\]
But, if during the solve, $x_i>0$, then stop the iterations; set
$x^*$ as the current point.
\STATE $\cS \leftarrow \supp(x^*)$
\STATE $\cB_* \leftarrow$ degenerate basic indices for $x^*$
\IF{ $\cB_0 \neq \emptyset$ and  $\cB_* \neq \emptyset$ }
\IF{$p^*=0$ (strict feasibility fails)}
\STATE Use dual certificate $y^*$ to  satisfy \cref{eq:auxsystem}
\STATE $y^\circ \leftarrow y^\circ + y^*$
\STATE $\cJ_0 \leftarrow \cJ_0 \cup (\supp(A^Ty^*) \cap \cB)$
\STATE $\cB_0 \leftarrow \cB_0 \setminus \{ \cS \cup \cJ_0 \}$
\ELSE 
\STATE $\cB_0 \leftarrow \cB_0 \setminus \cS$
\ENDIF
\STATE Choose $\gamma \in (0,1)$ and set $x^\circ \leftarrow \gamma x^\circ + (1-\gamma) x^*$
\ENDIF
\ENDWHILE 
\IF{$\cJ_0 \ne \emptyset$}
\STATE $z^\circ = A^T y^\circ$ (exposing vector)
\STATE  $\cR \leftarrow$  redundant row indices of $A\left(:,  \supp(z^\circ)^c \right)$
\STATE  $A\leftarrow A(\cR^c,\supp(z^\circ)^c), \  b \leftarrow b(\cR^c)$
\ELSE
\STATE Run \Cref{algo:Nondegen2Slater} with $x^\circ$ and $\det(A_\cB)\ne 0$ (use $x^*$ and $\cB_*$, if $\cB_* = \emptyset)$   
\ENDIF
\end{algorithmic}
\caption{Preprocessing Phase I Part B; Towards Strict Feasibility}
\label{algo:phase1partB}
\end{algorithm}

The following allows for simplifications in~\Cref{algo:phase1partB}.
\begin{lemma}
\label{lemma:1AlwaysinBasis} 
Let $\cB$ be an initial basis containing the index $i$ for problem \eqref{eq:max_X1}. Then the index $i$ always remains in the basis throughout the iterations. 
\end{lemma}
\begin{proof}
Without loss of generality, we let $i=1$. We argue that $1$ is not chosen to leave the basis. Let $y^* = (A_\cB^T)^{-1} c_\cB$ and $\bar{A} = A_{\cB}^{-1} A$.
Suppose that the reduced cost at the index $j$ is positive. Then 
\[
\begin{array}{rcl}
0<\bar c_j 
= c_j-A_j^Ty^*
= -A_j^Ty^*
= -A_j^T(A_\cB^T)^{-1} e_1
= -\bar A_{1j}.
\end{array}
\]
Since $\bar A_{1j} <0$,  the index $1$ is not chosen to leave the basis $\cB$.
\end{proof}

The following special case is of interest.
Namely, no simplex pivoting steps are required to determine strict feasibility. 
\begin{theorem}(preprocessing for degree of degeneracy $1$)
\label{thm:effFR}
Given a basis $\cB$, let $\bar{x}$ be a \BFS with the
degree of degeneracy exactly one and with $\bar x_i=0, i\in \cB$. 
Let $\cN = \{1,\ldots,n\} \setminus \cB$ and let
 $\bar y = (A_\cB^T)^{-1} c_\cB, c_\cB = e_i$. 
Then strict feasibility fails if, and only if,~$\bar y$
satisfies~$A^T_\cN \bar y \ge 0$.
\end{theorem}

\begin{proof}
Suppose that $\bar x$ is a degenerate \BFS with basis $\cB$.
Without loss of generality, we assume $1\in\cB$ and $1$ is the degenerate index. We consider the problem
\[
p^*_1 = \max \{x_1 \,:\, Ax=b, \, x\geq 0\}.
\]
We note that $\<b,\bar y\> = 0$ since $\<b,\bar y\>$ is identical to the current objective value `$0$'.
The backward direction is clear by \Cref{prop:Farkas}.
Now suppose that strict feasibility fails. 
Suppose to the contrary that $A^T_\cN \bar y \ge 0$ fails.
Then there exists $j$ such that $A^T_j \bar y <0, \ j \in \cN$.
Note that, by \Cref{lemma:1AlwaysinBasis}, that $1$ is not chosen to leave the basis. 
Thus, there is an index $k \ne 1, k\in \cB$ that leaves the basis.
Since all other basic variables are positive, we obtain a positive step length and we improve the objective value, which yields a contradiction to $p^*_1=0$.
\end{proof}

Upon the termination of \Cref{algo:phase1partB}, we can always determine
whether the system $\cF$ has a strictly feasible point or not. 
\Cref{algo:phase1partB} terminates in a finite number of iterations since we remove at least one element from the set $\cB_0$ in each iteration. 
We emphasize that we do not need to solve the auxiliary \LPp s for all
$i\in \{1,\ldots,n\}$.  We solve \cref{eq:max_X1} only for the degenerate basic indices of the predetermined basis $\cB$.
However, upon termination of \Cref{algo:phase1partB}, it is possible
that we have not 
obtained $\face(\cF, \Rnp)$, the minimal face containing $\cF$. Although the
complete \FR for \LP can be completed in one iteration, one step
termination is possible only when we find a solution $y$ of 
\cref{eq:auxsystem} so that $A^Ty$ is in the relative interior of the
conjugate face of $\face(\cF,\Rnp)$. In this case, we can rerun
\Cref{algo:phase1partB} with the current facially reduced system. For
finding an initial basis for the second trial, we may use the efficient
basis recovery scheme~\cite[Chapter 7]{SWright:96}.

One of the nice features of \Cref{algo:phase1partB} is that we do not need to search for a new initial basis $\cB$ for each iteration; the initial basis remains the same. Therefore, our approach can be directly employed after the standard phase-I of the two phase simplex method.

We do not need a lot of pivoting steps to determine if $p_i^*$ is zero or positive. 
If $p_i^*=0$, the initial $\cB$ is indeed a basis that gives the optimal value. However the dual feasibility may not be obtained immediately\footnote{If we have a nondegenerate initial basis, then the dual feasibility is immediately obtained. However, our initial basis is degenerate.}. Thus, there may be additional pivots required to obtain the dual feasibility. However, since the optimal value is obtained at $\cB$, we do not expect that the optimal basis search to be time-consuming. 
For the case $p_i^* \in (0,\infty)$, the optimal value $p_i^*$ does not need to be
found. Hence once a basis that gives a positive support on $i$ is found,
we can terminate the maximization problem in \Cref{algo:phase1partB}  immediately. We recall from
\Cref{lemma:1AlwaysinBasis} that the index $i$ in \cref{eq:max_X1} never leaves the basis.
In the case of $p_i^*=\infty$, we can perform the following operation.
Let $\cB_c$ be a basis that indicates $p_i^*=\infty$ and let $j$ be an entering variable that indicates the unboundedness. 
Then by setting
\[
x^\circ(j) \leftarrow 1, \ 
x^\circ(\cB_c) \leftarrow x_{\cB_c} - A_{{\cB_c}}^{-1}A_j \text{ and } 
x^\circ((\{j\} \cup \cB_c)^c) = 0,
\]
we obtain a feasible solution $x^\circ$ that yields a positive objective value.

We often get an exposing vector that reveals more than one element in
the set $\cI_0$ by solving~\cref{eq:max_X1}. 
Let  $p_1^*=0$ in \cref{eq:max_X1} and let $y^*$ be a dual feasible solution. 
Suppose that $A^T y^* = e_1$, i.e., only one exposed variable is
revealed. Then $y^* \in \nul (A(:,2:n)^T )$.  
Since the data matrix $A$ has more columns than rows, $y^* \in \nul
(A(:,2:n)^T )$ generally implies $y^*=0$; this makes $A^Ty^* = e_1$
impossible.

\index{degree of degeneracy}

When an instance is large and have a \BFS with a very large degree of degeneracy, one may adopt parallel computing for \Cref{algo:phase1partB}
in order to reduce the total computation time. We note again that the
initial basis remains the same throughout the iterations. Hence, solving
\cref{eq:max_X1} for individual $i\in \cB_0$ can be performed independently.
In fact, parallel computing can be used to obtain a strictly feasible solution in \Cref{algo:Nondegen2Slater} as well; the weight vector $\gamma$ can be chosen independently for each $j \in \cN$.

\subsection{Discussions}
\label{sec:Discussions}

In this section we discuss the main result in 
\Cref{sec:LackSlaterANDDegen,sec:phase1partB} and
make connections to new results and known results in the literature.

\subsubsection{Distance to Infeasibility}
\label{sect:distinfeas}


\index{$\dist(b,\cF=\emptyset)$, distance to infeasibility}
\index{distance to infeasibility, $\dist(b,\cF=\emptyset)$}


The  \textdef{distance to infeasibility} is a measure of the
smallest perturbations of the data  $(A,b)$ of a problem that renders
the problem infeasible. In our setting, we can use the following simplification of the distance to infeasibility from \cite{Ren:94} by restricting the perturbation to $b$, i.e., we can force infeasibility using only perturbation in $b$;
\[
\dist(b,\cF=\emptyset) 
:= 
\inf \left\{ \ \|b-\tilde{b} \| \ : \ \{x\in \Rn : Ax=\tilde{b} , \  x\ge 0 \} =\emptyset \ \right\}.
\]
Many interesting bounds, condition numbers, are shown in \cite{Ren:94} 
under the assumption that the distance to infeasibility is positive and
known. 
It is known that a positive distance to infeasibility of $\cF$ implies that strict feasibility holds for $\cF$; see e.g.,~\cite{FrVe:97,FreundRobertM2005OaEo}.
The contrapositive of this statement is that, if strict feasibility fails for $\cF$, then the distance to infeasibility is $0$.
We revisit this statement with the facially reduced system
\cref{eq:setEqauivalence}. We provide an elementary proof that there is an arbitrarily small perturbation for the data vector $b$ of $\cF$ that yields the set $\cF$ infeasible, i.e., $\dist(b,\cF=\emptyset)=0$.
Furthermore, we provide explicit perturbations that render the set $\cF$ empty.

Suppose that $\cF$ fails strict feasibility. Recall the representation \cref{eq:setEqauivalence} for $\cF$.
Let $AV=QR$ be a QR decomposition of $AV$, where $Q\in \R^{m\times m}$ orthogonal,  $R\in \R^{m\times (n-s_z)}$ upper triangular.
We write $Q = \begin{bmatrix} Q_1 & Q_2 \end{bmatrix}$ so that $\range(Q_1) = \range(AV)$.
Then, by the orthogonality of $Q$, we have
\begin{equation*}
\label{eq:setHittingwithQ}
Ax = AVv = b
\iff
Q^TAx =  R v = Q^Tb .
\end{equation*}
Since $AV$ is a rank deficient matrix  (see \Cref{lemma:ConstrRedundant}), the upper triangular matrix $R$ is of the form
\begin{equation}
\label{eq:RofQRdecomp}
R = \begin{bmatrix}
\bar{R} \\ 0 \end{bmatrix}  \in \R^{m\times (n-s_z)} \text{ and } \bar{R}\in \R^{\rank(AV) \times (n-s_z)} \text{ with nonzero diagonal. }
\end{equation}
Since $b\in \range(AV)$, the last $m-\rank(AV)$ entries of $Q^Tb$ are equal to $0$, i.e.,
\[
Q^Tb 
= \begin{pmatrix} Q_1^Tb \\ Q_2^T b \end{pmatrix} 
= \begin{pmatrix} Q_1^Tb \\ 0 \end{pmatrix} .
\]
Consequently, the unrealized implicit non-surjuectivity produces the system 
\begin{equation}
\label{eq:reducedsysNoRemoval}
\begin{bmatrix} \bar{R} \\ 0 \end{bmatrix} v =
\begin{pmatrix} Q_1^Tb \\ 0 \end{pmatrix} , \ v \in \R_+^{n-s_z}.
\end{equation}
Any perturbation on the last $m-\rank(AV)$ equations in \cref{eq:reducedsysNoRemoval} that causes the system inconsistency
renders the system \cref{eq:reducedsysNoRemoval} infeasible while maintaining the dimension of $\relint(\cF)$.
For instance, replacing the right-hand side vector in~\cref{eq:reducedsysNoRemoval} by~$\begin{pmatrix} Q_1^Tb \\ \xi \end{pmatrix}$ with any nonzero vector $\xi \in \R^{m-\rank(AV)}$ renders \cref{eq:reducedsysNoRemoval} infeasible. 
Replacing the data matrix in \cref{eq:reducedsysNoRemoval} by $\begin{bmatrix} \bar{R} \\ \Phi \end{bmatrix}$ for which $\Phi$ contains a positive row vector also renders \cref{eq:reducedsysNoRemoval} infeasible.




We now present a class of perturbations of $b$ that maintains the feasibility of the set $\cF$ as well as a special perturbation of $b$ that forces $\cF$ to be infeasible. 
Such perturbations can be found using linear combinations of the columns of $Q_1$ or $Q_2$, respectively.
We relate this observation to the solution of the auxiliary system~\cref{eq:auxsystem} in the proof of~\Cref{prop:distInfrhsPert} below.

\begin{prop}
\label{prop:distInfrhsPert}
Suppose that strict feasibility fails for $\cF$, and let $\cF$ have the representation~\cref{eq:setEqauivalence}. Then the following hold.
\begin{enumerate}
\item \label{item:posDistInf}
For all $\Delta b \in \range(AV)$ with sufficiently small norm, 
the set $\{x\in \Rnp : Ax = b+ \Delta b\} $ is feasible.
\item  \label{item:zeroDistInf} 
Let $\bar{y} \in \Rm$ be a solution to the auxiliary system \cref{eq:auxsystem}. 
Then perturbing the right-hand side vector $b$ of $\cF$ in the direction
$\bar{y}$ makes the system $\cF$ infeasible. 
\end{enumerate}
\end{prop}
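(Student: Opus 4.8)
The plan is to prove the two items of \Cref{prop:distInfrhsPert} by exploiting the $QR$-based reformulation already set up in \cref{eq:setHittingwithQ,eq:RofQRdecomp}. The main point is that after facial reduction the system $Ax=b$ is equivalent to the lifted system $Rv = Q^Tb$ with $v\in\R^{n-s_z}_+$, and $R$ has the block form $\begin{bmatrix}\bar R\\0\end{bmatrix}$ with $\bar R$ of full row rank $\rank(AV)$ and nonzero diagonal. So feasibility is governed entirely by whether the last $m-\rank(AV)$ coordinates of the right-hand side vanish: if $Q_2^Tb'=0$ then the reduced system $\bar R v = Q_1^T b'$ is solvable (and, using that the original $\cF$ was nonempty, solvable with $v\ge 0$ for $b'$ near $b$); if $Q_2^T b'\neq 0$ the system is inconsistent and the set is empty.

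For \Cref{item:posDistInf}, I would take $\Delta b\in\range(AV)=\range(Q_1)$, so $Q_2^T(b+\Delta b)=Q_2^Tb=0$ (the latter because $b=AVv_0$ for some $v_0\ge 0$ by nonemptiness of $\cF$, hence $b\in\range(AV)$). Then the perturbed lifted system reads $\bar R v = Q_1^T(b+\Delta b)$, which has a solution; I then need to argue it has a \emph{nonnegative} solution for $\|\Delta b\|$ small. For this I would start from a strictly feasible point $\hat v>0$ of the facially reduced problem (which exists — this is exactly what the first step of \FR guarantees, \cref{eq:setEqauivalence}), note $\bar R\hat v = Q_1^T b$, and observe that since $\bar R$ has full row rank there is a (bounded) right inverse $\bar R^\dagger$, so $v(\Delta b):=\hat v + \bar R^\dagger Q_1^T\Delta b$ solves the perturbed system and satisfies $v(\Delta b)>0$ once $\|\Delta b\|<\min_i\hat v_i/\|\bar R^\dagger Q_1^T\|$. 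Pulling back via $x=Vv(\Delta b)\ge 0$ gives a point of $\{x\in\Rnp:Ax=b+\Delta b\}$.

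For \Cref{item:zeroDistInf}, I would exhibit an explicit infeasible perturbation of arbitrarily small norm: pick any unit vector $q$ in $\range(Q_2)$ (possible since $\rank(AV)<m$ by \Cref{lemma:ConstrRedundant}, so $Q_2$ has at least one column), and set $\tilde b = b + \varepsilon q$ for $\varepsilon>0$. Then $Q_2^T\tilde b = \varepsilon Q_2^T q \neq 0$, so at least one of the last $m-\rank(AV)$ entries of $Q^T\tilde b$ is nonzero; by \cref{eq:setHittingwithQ} combined with the block structure \cref{eq:RofQRdecomp}, the system $Rv=Q^T\tilde b$, $v\ge 0$, is inconsistent (the bottom block forces $0=$ something nonzero), hence $\{x\in\Rnp:Ax=\tilde b\}=\emptyset$ and $\|\,b-\tilde b\,\|=\varepsilon$. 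Letting $\varepsilon\downarrow 0$ gives $\dist(b,\cF=\emptyset)=0$.

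The only genuinely delicate point is the nonnegativity/feasibility claim in \Cref{item:posDistInf}: equivalence of the systems only gives existence of a \emph{real} solution $v$, and one must upgrade to $v\ge 0$. This is where the first step of facial reduction is essential — it supplies a strictly feasible $\hat v>0$, and a perturbation argument around $\hat v$ using a bounded right inverse of the full-row-rank matrix $\bar R$ closes the gap; everything else is routine linear algebra with the $QR$ block decomposition.
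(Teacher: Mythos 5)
Your argument for \Cref{item:posDistInf} is correct and is essentially the paper's argument made explicit: the paper simply invokes the fact that the strictly feasible reduced system $\bar R v = Q_1^T b$ has positive distance to infeasibility, whereas you construct the perturbed feasible point $\hat v + \bar R^\dagger Q_1^T\Delta b$ by hand using a bounded right inverse. Both proofs are fine.

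Your argument for \Cref{item:zeroDistInf}, however, has a genuine gap. You show that the lifted system $Rv = Q^T\tilde b$ is inconsistent and conclude ``hence $\{x\in\Rnp:Ax=\tilde b\}=\emptyset$.'' That step is invalid. The identification $\{x\in\Rnp:Ax=b\}=\{Vv:AVv=b,\ v\ge 0\}$ from \cref{eq:setEqauivalence} is \emph{specific to the right-hand side $b$}: it is derived from the exposing-vector relation $\langle x,A^T\bar y\rangle=\langle b,\bar y\rangle=0$ and ceases to hold once $b$ is replaced by $\tilde b$. Inconsistency of the reduced system $AVv=\tilde b$ only rules out feasible points lying in $\range(V)$; for $\tilde b\neq b$ the feasible set may well escape $\range(V)$. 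Moreover, it is simply false that every $q\in\range(Q_2)$ yields infeasibility. Take
\[
A=\begin{bmatrix}1&1&0\\0&0&1\end{bmatrix},\qquad b=\begin{pmatrix}1\\0\end{pmatrix},
\]
with exposing vector $\bar y=(0,1)^T$, so that $V$ selects columns $1,2$, $AV=\bigl[\begin{smallmatrix}1&1\\0&0\end{smallmatrix}\bigr]$, and $\range(Q_2)=\spann\{e_2\}$. Choosing $q=e_2$ and $\tilde b=(1,\varepsilon)^T$ with $\varepsilon>0$ makes the reduced system $AVv=\tilde b$ inconsistent, yet $\{x\ge 0:Ax=\tilde b\}=\{x\ge 0:x_1+x_2=1,\ x_3=\varepsilon\}$ is nonempty. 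The paper avoids this by picking the \emph{specific} perturbation $\Delta b_\epsilon=-\epsilon\bar y$, where $\bar y$ is the Farkas/exposing vector satisfying \cref{eq:auxsystem}, and then certifying infeasibility of the \emph{original} (not reduced) system directly from Farkas' lemma: $A^T\bar y\ge 0$ and $\langle b+\Delta b_\epsilon,\bar y\rangle=-\epsilon\|\bar y\|^2<0$. Your proof of \Cref{item:zeroDistInf} needs this Farkas-type certificate; the QR block argument alone cannot reach the conclusion.
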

\begin{proof}
Let $\Delta b$ be any perturbation in $\range(AV)$. 
Let $QR=AV$ be a QR decomposition of $AV$. In particular, let $R$ have the form \cref{eq:RofQRdecomp} and $Q = \begin{bmatrix} Q_1 & Q_2 \end{bmatrix}$ so that $\range(Q_1) = \range(AV)$.
Let $\epsilon$ be a sufficiently small scalar. 
Then 
\begin{equation}
\label{eq:posdistQR}
Ax = AVv = b + \epsilon \Delta b 
\iff
Rv = Q^Tb + \epsilon Q^T \Delta b
\iff \bar{R} v = Q_1^T b + \epsilon Q_1^T \Delta b .
\end{equation}
The last equivalence holds since $Ax=b$ and $\Delta b \in \range(AV) = \range(Q_1)$.
Since the system $\bar{R} v = Q_1^T b $ satisfies the Mangasarian-Fromovitz constraint qualification, the distance to infeasibility of this system is positive.
Thus, the perturbed system $\{v: \bar{R} v = Q_1^T b + \epsilon Q_1^T \Delta b, \  v\ge 0\}$ remains feasible.
Therefore, by \cref{eq:posdistQR}, perturbing $\cF$ along the direction $\Delta b \in \range(AV)$ maintains the feasibility and this concludes the proof for \Cref{item:posDistInf}.

For \Cref{item:zeroDistInf} we show that perturbing $b$ with $\Delta b=\bar{y}$ renders $\cF$ infeasible, where $\bar{y}$ is a solution to the system \cref{eq:auxsystem}.
By \Cref{prop:Farkas} and \cref{eq:redunProof}, the nonzero vector $\bar{y}\in \Rm$ is in $\nul( (AV)^T)$.
Then we have
\[
\bar{y} \in \range (AV)^\perp = \range(Q_2)
\implies 
\bar{y} = Q_2 \bar{u}  \text{ for some nonzero } \bar{u} .
\]
We recall Farkas' lemma:
\[
\{y\in \Rm: A^Ty \ge 0, \ \<b,y\> <0 \} \ne \emptyset
\implies \cF = \emptyset.
\]
Now, for any $\epsilon >0$, setting $\Delta b_\epsilon = - \epsilon \bar{y}$ yields
\begin{equation}
\label{eq:FarkasInfea}
A^T \bar{y} \ge 0 ,\ \<b,\bar{y} \> = 0 
\implies 
A^T \bar{y} \ge 0 ,\ \<b+\Delta b_\epsilon ,\bar{y} \> < 0 .
\end{equation}
Hence, by letting $\epsilon \rightarrow 0^+$, we see that the distance to infeasibility, $\dist(b,\cF=\emptyset)$, is equal to $0$.
\end{proof}

We emphasize that the result 
\[
\cF \text{ fails strictly feasibility } \implies
\dist ((A,b),\cF =\emptyset) =0
\]
gives rise to the second step \cref{eq:setEqauivalencefullrow} of \FR discussed in \Cref{sec:FacialReduction}.
We note that the instability discussed in this section essentially
originates from the observation made in \Cref{lemma:ConstrRedundant},
i.e.,~redundant equalities arise in the facially reduced system.
Facially reduced system allows us to exploit the root of potential instability when the problem data $A$ or $b$ is perturbed.
Although the distance to infeasibility is $0$ in the absence of strict feasibility, 
\Cref{prop:distInfrhsPert} suggests that a carefully chosen perturbation of $b$ does not have an impact on the feasibility of $\cF$.
We provide a related numerical experiment in \Cref{sec:numericsDistInfes} below.




\subsubsection{Applications to Known Characterizations for Strict Feasibility}

There are some known characterizations for strict feasibility of $\cF$.
Using these characterizations we can obtain extensions of
\Cref{thm:LPdegen,thm:degeneracyPataki,coro:contraPosLPdegen}.

\index{($\cD$), dual of ($\cP$)}
\index{dual of ($\cP$), ($\cD$)}

The dual ($\cD$) of ($\cP$) is 
\begin{equation}
\label{eq:dualLP}
(\cD) \qquad \max_{y,s} \left\{ b^Ty \ : \ A^Ty + s = c, \ s\ge 0 \right\} .
\end{equation} 
It is known that strict feasibility fails for $\cF$ if, and only if, the set of optimal solutions for the dual $(\cD)$ is unbounded; see e.g., \cite[Theorem 2.3]{SWright:96} and \cite{GauvinJacques1977Anas}. Then \Cref{coro:dualSlaterChar} follows.
\begin{cor}
\label{coro:dualSlaterChar}
\begin{enumerate}
\item
Suppose that the set of optimal solutions for the dual $(\cD)$ is unbounded. 
Then every basic feasible solution to $\cF$ is degenerate.
\item
Suppose that there exists a nondegenerate basic feasible solution to $\cF$.
Then the set of optimal solutions for the dual $(\cD)$ is bounded.
\end{enumerate}
\end{cor}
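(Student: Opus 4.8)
The plan is to deduce \Cref{coro:dualSlaterChar} directly from \Cref{thm:LPdegen} and \Cref{coro:contraPosLPdegen} by invoking the cited equivalence between failure of strict feasibility for $\cF$ and unboundedness of the dual optimal set. Specifically, I would first record the known fact: strict feasibility fails for $\cF$ $\iff$ the optimal solution set of $(\cD)$ is unbounded; this is exactly the content of \cite[Theorem 2.3]{SWright:96} and \cite{GauvinJacques1977Anas}, so I may cite it without reproving it.

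For part (1), I would assume the dual optimal set is unbounded. By the cited equivalence, this is equivalent to strict feasibility failing for $\cF$. Then \Cref{thm:LPdegen} applies verbatim and yields that every basic feasible solution to $\cF$ is degenerate. For part (2), I would take the contrapositive route: suppose there is a nondegenerate basic feasible solution to $\cF$. Then \Cref{coro:contraPosLPdegen} gives a strictly feasible point in $\cF$, i.e., strict feasibility holds; by the cited equivalence (in its contrapositive form), the dual optimal set is bounded. Alternatively, part (2) is literally the contrapositive of part (1) combined with the equivalence, so one short sentence suffices.

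There is essentially no obstacle here: the corollary is a transcription of \Cref{thm:LPdegen} and \Cref{coro:contraPosLPdegen} through a black-box dictionary supplied by the references. The only point requiring mild care is making sure the logical direction of the cited equivalence is used correctly in each part — for (1) I need "dual optimal unbounded $\implies$ Slater fails", and for (2) I need "Slater holds $\implies$ dual optimal bounded", which are the two directions of the same biconditional. So the proof is just: state the equivalence, apply \Cref{thm:LPdegen} for (1), apply \Cref{coro:contraPosLPdegen} for (2), each in one line.

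\begin{proof}
Recall the known characterization: strict feasibility fails for $\cF$ if, and only if, the optimal solution set of $(\cD)$ is unbounded; see \cite[Theorem 2.3]{SWright:96} and \cite{GauvinJacques1977Anas}.

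For item (1), if the optimal solution set of $(\cD)$ is unbounded, then by this characterization strict feasibility fails for $\cF$, and \Cref{thm:LPdegen} implies that every basic feasible solution to $\cF$ is degenerate.

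For item (2), suppose there exists a nondegenerate basic feasible solution to $\cF$. By \Cref{coro:contraPosLPdegen}, $\cF$ contains a strictly feasible point, so strict feasibility holds for $\cF$. By the characterization above, the optimal solution set of $(\cD)$ is bounded.
\end{proof}
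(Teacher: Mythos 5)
Your proof is correct and matches the paper's intended argument exactly: the paper states the equivalence (failure of strict feasibility for $\cF$ iff unboundedness of the dual optimal set), cites \cite[Theorem 2.3]{SWright:96} and \cite{GauvinJacques1977Anas}, and then simply asserts ``Then \Cref{coro:dualSlaterChar} follows,'' leaving the one-line applications of \Cref{thm:LPdegen} and \Cref{coro:contraPosLPdegen} implicit. You have merely made explicit what the paper leaves to the reader, with the logical directions used correctly in each item.
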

It is known that strict feasibility holds for $\cF$ if, and only if, $b \in \relint(A(\Rnp))$, where $\relint$ denotes the relative interior; see e.g., \cite[Proposition 4.4.1]{DrusWolk:16}. 
Then if one finds a set of indices $\cI \subset \{1,\ldots,n\}$ such that $A_\cI$ is nonsingular and $A_\cI z = b$ has a solution $z$ with positive entries, then $b \in \relint(A(\Rnp))$.

\index{relative interior, $\relint$}
\index{$\relint$, relative interior}



\subsubsection{Applications to Obtain a Strictly Complementary Primal-Dual Solution}

In this section we present an application of \Cref{algo:Nondegen2Slater}
for obtaining a strictly complementary primal-dual optimal solution.

Let $(x^*,y^*,s^*)$ be an optimal triple for the standard primal-dual \LP pair. 
Let $\cB^* \cup \cN^* = \{1,\ldots,n\}$  be the strict complementary
partition of the primal-dual optimal pair.
The existence of such a partition is guaranteed by the Goldman-Tucker
theorem \cite{MR21:633} and the partition $\cB^* \cup \cN^*$ is unique.
For the first application of \Cref{algo:Nondegen2Slater}, we provide a method for obtaining a strict complementary primal-dual solution when the primal optimal solution $x^*$ is nondegenerate or the submatrix $A(:,\supp(x^*))$ of $A$ has rank $m$.
To elaborate, we list the two cases where \Cref{algo:Nondegen2Slater} can be used to obtain maximal complementary solutions.
\begin{enumerate}
\item
Let $x^*$ be a nondegenerate (optimal) basic feasible solution.
Then, $\supp(s^*)=\cN^*$ and $\supp(x^*)$ can be extended to complete $\cB^*$;
\item 
Let $x^*$ be an optimal solution such that $A(:,\supp(x^*))$ is full
row rank.
Then, $\supp(s^*)=\cN^*$ and $\supp(x^*)$ can be extended to complete $\cB^*$.\end{enumerate}
Suppose that we are given a primal-dual optimal solution $(x^*,y^*,s^*)$
of the form 
\begin{equation}
\label{eq:optimalPDpair}
\begin{bmatrix} A_\cB & A_\cJ & A_\cN \end{bmatrix} 
\begin{pmatrix} x_\cB \\ x_\cJ \\ x_\cN \end{pmatrix}
= b, \text{ where } 
\rank(A_\cB) = m, \
\begin{pmatrix} x_\cB \\ x_\cJ \\ x_\cN \end{pmatrix}
\begin{array}{c} > \\ = \\ = \end{array}
\begin{pmatrix} 0 \\ 0 \\0 \end{pmatrix}  \text{ and }
\begin{pmatrix} s_\cB \\ s_\cJ \\ s_\cN \end{pmatrix}
\begin{array}{c} = \\ = \\ > \end{array}
\begin{pmatrix} 0 \\ 0 \\0 \end{pmatrix}  .
\end{equation}
We claim that $\cN^* = \supp(s^*)$.
That is, the support of the current dual optimal solution $s^*$ is maximal and hence we obtain the strict complementary partition for free. 
We rewrite the system $Ax=b$ of \cref{eq:optimalPDpair} as
\[
\begin{bmatrix} A_{\cB_1} & A_{\cB_2} & A_\cJ  \end{bmatrix} 
\begin{pmatrix} x_{\cB_1} \\ x_{\cB_2} \\ x_\cJ \end{pmatrix} = b, \
\text{ where } 
A_{\cB} =  \begin{bmatrix} A_{\cB_1} & A_{\cB_2}  \end{bmatrix} , \ 
x_\cB = \begin{pmatrix} x_{\cB_1} \\ x_{\cB_2} \end{pmatrix}
\text{ and }   \rank(A_{\cB_1}) = m.
\]
Then, by replacing the data in \Cref{algo:Nondegen2Slater} by 
\[
\cN \leftarrow \cJ,  \ A\leftarrow A(:,\cB_1\cup \cB_2 \cup \cN) , \ \tilde{x} \leftarrow x^* ,
\]
we can endow positive weights to $x_\cJ$ while maintaining the primal feasibility.
Since we maintain the feasibility of the primal-dual solution without violating the complementarity, we maintain the optimality.

\subsubsection{Lack of Strict Feasibility and Interior Point Methods}
\label{sec:illCondIPTIPS}

In this section we provide a new perspective on the ill-conditioning
that typically arises in interior point methods. 
Many interior point algorithms are derived from block
Gaussian-elimination of the linearized primal $(\cP$) and dual $(\cD$)
optimality conditions (KKT conditions).
Let $(x_c,y_c, s_c)$ be the current primal-dual pair iterate.
The search direction is computed by solving the Newton equation
\begin{equation}
\label{eq:NewtonSystemIPM}
\begin{bmatrix}
& A^T & I \\
A & & \\
\Diag(s_c) & &\Diag(x_c)
\end{bmatrix} 
\begin{pmatrix}
\Delta x \\ \Delta y \\ \Delta s
\end{pmatrix} 
= 
-\begin{pmatrix} r_d \\ r_p \\ r_c \end{pmatrix} ,
\end{equation}
where $r_d, r_p, r_c$ are the residuals 
of dual feasibility, primal feasibility and complementarity, respectively. 
After the block elimination, we first
find the change $\Delta y$ by solving the so-called 
normal equation, a square system,
\begin{equation}
\label{eq:normalMatIPT}
AD_cA^T \Delta y = \bar{r} , \text{ where } 
D_c = \Diag(x_c) \Diag(s_c)^{-1}, 
\end{equation}
$\bar{r}\in \Rm$ is some residual; see e.g., \cite[Chapter 11]{{SWright:96}}.
It is known that \eqref{eq:normalMatIPT} often becomes ill-conditioned near an optimum.
The ill-conditioning of the matrix $AD_cA^T$ under degeneracy is
discussed in \cite{MR94j:90021}
in terms of the lack of nice positive diagonal elements of $D_c$. 
This relates to our results in the sense that 
all vertices that form the optimal face of $(\cP)$ are also degenerate in the absence of strict feasibility.
Moreover, we show that the ill-conditioning of the matrix $AD_cA^T$ not only originates from the columns of $A$ chosen by $D_c$ but also from the rows of $A$ in the absence of strict feasibility. 
In particular, a large $\ips$ is a good indicator for ill-conditioning.

We partition the matrix $A =  \begin{bmatrix}
P_{\bar{m}}AV & A_{\cI_0} \\ R_{AV} & R_{\cI_0} \end{bmatrix}$, where $[A_{\cI_0};R_{\cI_0}]$ corresponds to the submatrix of $A$ associated with the index set $\cI_0$.
The submatrix $R_{AV}$ refers to the rows of $A$ that are implicitly redundant due the lack of strict feasibility.
Let $(x^*,y^*,s^*)$ an optimal triple and let $D^* = \Diag(x^*) \Diag(s^*)^{-1}$.
As $x_c \to x^*$, i.e., as the iterates get closer to the feasible set $\cF$, we observe the limiting behaviour $A D_c A^T  \to AD^*A^T$ below:
\[
\begin{array}{cccl}
A D_c A^T 
\  \to   \  &
AD^*A^T  &=& 
\begin{bmatrix}
P_{\bar{m}}AV & A_{\cI_0} \\ R_{AV} & R_{\cI_0}
\end{bmatrix}
\begin{bmatrix}
D^*_{AV} & 0 \\ 0& 0
\end{bmatrix}
\begin{bmatrix}
P_{\bar{m}}AV & A_{\cI_0} \\ R_{AV} & R_{\cI_0}
\end{bmatrix}^T \\
&&=&
\left[
\begin{array}{rr}
(P_{\bar{m}}AV) D^*_{AV} (P_{\bar{m}}AV)^T  & (P_{\bar{m}}AV) D^*_{AV} R_{AV}^T \\
R_{AV} D^*_{AV} (P_{\bar{m}}AV)^T & R_{AV}  D^*_{AV} R_{AV}^T
\end{array}
\right]
\end{array} 
\]
where $D^*_{AV}$ is the submatrix of $D^*$  with the diagonal associated with $\cI_+$. 
We recall from \Cref{lemma:ConstrRedundant} that the rows of $R_{AV}$ are linear combinations of the rows of $P_{\bar{m}}AV$. Therefore, the more implicit redundant constraints $\cF$ has, 
the more `$0$' singular values $AD^*A^T$  has, i.e.,~ill-conditioned.

The self-dual embedding \cite{{int:Ye55}} is a popular formulation of the primal-dual \LP pair used for an interior point method. An attractive feature of the self-dual embedding is that a \emph{feasible} initial iterate in the interior is analytically given. 
The success of the self-dual embedding technique is supported by strong performances of some solvers.
However, the absence of strict feasibility results in the same type of ill-conditioning even when this reformulation is used.
For instance, \cite[equation (17)]{int:Ye55} displays the equation as a part of computing the search direction~$(d_x;d_y)$:
\[
\begin{bmatrix}
X^k S^k & - X^k A^T \\  A X^k & 0 
\end{bmatrix} 
\begin{pmatrix} (X^k)^{-1} d_x \\ d_y \end{pmatrix}
= 
\begin{pmatrix} \gamma \mu^k e - X^k s^k \\ 0  \end{pmatrix}
- \begin{bmatrix} X^k c  & - X^k \bar{c} \\ -b & \bar{b} \end{bmatrix}
\begin{pmatrix} d_\tau \\ d_\theta \end{pmatrix} .
\]
Here, $X^k = \Diag(x^k)$ and $S^k = \Diag(s^k)$, where $x^k,s^k$ are the current primal-dual iterate. 
It then uses the back-solve steps to complete the remaining components of the search direction.
For simplicity, we set the right-hand side of the system to be $\begin{pmatrix}
r_1 \\ r_2 \end{pmatrix}$.
By expanding the first block equation, we obtain
\[
(X^k S^k) (X^k)^{-1} d_x   - X^k A^T d_y = r_1 
\iff
(X^k)^{-1} d_x = (X^k S^k)^{-1}r_1 +  (X^k S^k)^{-1} X^k A^T d_y .
\]
We then substitute the equality above into the second block equation, i.e.,
\[
\begin{array}{rcl}
A X^k (X^k)^{-1} d_x  = r_2 
&\iff & A X^k (S^k)^{-1} A^T d_y  = r_2 -A X^k(X^k S^k)^{-1}r_1 . \\
\end{array}
\]
Finally, we obtain the normal matrix $A X^k (S^k)^{-1} A^T$ that appear in \cref{eq:normalMatIPT}.

\subsubsection{Lack of Strict Feasibility in the Dual}
\label{sec:NoSlaterDual}

Recall~\Cref{rem:addredconstr} that redundant constraints can result in
poor behaviour for interior point methods. Moreover, complementary
slackness means we get dual variables fixed at $0$. This is one motivation for
considering \FR on the dual $(\cD)$; see \cref{eq:dualLP}.
We denote the feasible set of the dual $(\cD)$ by
\begin{equation}
\label{eq:dualFeaSet}
\cG := \{ (y,s) \in \Rm \oplus \Rnp \ : \ A^Ty + s = c \}
=
\left\{(y,s) \in \Rm \oplus \Rnp \ 
: \ \begin{bmatrix} A^T & I \end{bmatrix}
\begin{pmatrix} y \\ s \end{pmatrix} =c \right\} .
\end{equation}
The facial reduction arguments applied to the dual are parallel to the
ones given in \Cref{sec:FacialReduction}. 
We provide the theorem of the alternative 
for the dual and a short 
derivation for the facially reduced system for $\cG$ in \Cref{sec:dualdegennoSlater}.
We also conclude
that the absence of strict feasibility for $\cG$ implies dual
degeneracy at all \BFSp s.

A popular method for rewriting an instance with a free variable $x_i$ into the primal standard form
is to write $x_i$ into the difference of two nonnegative variables, i.e.,~$x_i=x_i^+ - x_i^-$ with $x_i^+, x_i^- \ge 0$.
This equivalent transformation does not seem to cause any difficulties at first glance;
at least the primal simplex method 
does not consider both $x_i^+$ and $x_i^-$ as  a basic variables simultaneously in order to form a nonsingular basis matrix. 
However, this equivalent transformation has a significant consequence to the dual program.
For any  $K \ge \max\{ x_i^+,x_i^- \}$, we can maintain the equality
\[
x_i = x_i^+ - x_i^- =(x_i^++K) - (x_i^-+K) . 
\] 
Thus, the primal optimal set is unbounded.
This implies that the dual feasible region of the reformulated primal does not have a strictly feasible point.
Consequently, the results that we established for the primal applies to the dual; (i) 
this implies that all \BFSp s of the dual are degenerate; (ii) the equality system for the dual feasibility contains implicit redundancies and thus the Newton equation that appear in the interior point method  \eqref{eq:NewtonSystemIPM} becomes very ill-conditioned near an optimum. 
More details for loss of strict feasibility in the dual is given 
in~\Cref{sec:dualDegenSection}.

\section{Numerical Investigation}
\label{sec:Numerics}
We now provide empirical evidence that \FR is indeed a useful
preprocessing tool for reducing the size of problems as well as for
improving the \emph{conditioning}. We do this
first for interior point methods and then for simplex methods.
In particular, this provides empirical evidence that lack of strict
feasibility is equivalent to implicit singularity.
All the numerical tests are performed using MATLAB version 2021a on Dell XPS 8940 with 11th Gen Intel(R) Core(TM) i5-11400 @ 2.60GHz 2.60 GHz with 32 Gigabyte memory.     
We use three different solvers in our tests: (i) \textdef{linprog} from
MATLAB\footnote{\url{https://www.mathworks.com/}. Version 9.10.0.1669831
(R2021a) Update 2.};
(ii) \textdef{SDPT3}\footnote{\url{https://www.math.cmu.edu/~reha/sdpt3.html}, version
SDPT3 4.0.}; and (iii) \textdef{MOSEK}\footnote{\url{https://www.mosek.com/}. 
Version 8.0.0.60.}.
MATLAB version 2021a is used to access all the solvers for the tests, and we
use their default settings for stopping criteria. 
Note that MOSEK has a preprocessing option.\footnote{MOSEK has a \href{https://docs.mosek.com/latest/toolbox/presolver.html}{presolve} with five steps that includes eliminating fixed variables. However, itis clear from the empirical evidencethat the variables fixed at $0$ are not found.}

\subsection{Empirics with Interior Point Methods}
\label{sec:NumericIntPtMethod}

In this section we compare the behaviour for finding near-optimal points 
with instances that do and do not satisfy strict feasibility.
More specifically, given a near optimal primal-dual point $(x^*,s^*)\in
\Rnpp \oplus \Rnpp$ obtained from an interior point solver, 
we observe the condition number, i.e.,~the ratio of largest
to smallest eigenvalues of the normal matrix at $(x^*,s^*)$:
\begin{equation}
\label{eq:condNumnearOpt}
\kappa\left( AD^*A^T \right) , \ \text{ where } D^* = \Diag(x^*)\Diag(s^*)^{-1} .
\end{equation}
We show that instances that do not have strictly feasible points tend 
to have significantly larger condition numbers of the normal 
equation near the optimum. We also present a numerical experiment 
on perturbations of the right-hand side vector $b$.

\subsubsection{Generating \LPp s without Strict Feasibility}
\label{sec:GenerationPrimal}

Given $m,n,r\in \N$, we construct the data $A \in
\Rmn$ and $b \in \Rm$ to satisfy \cref{eq:auxsystem} with $r$ as the dimension of the relative interior of $\cF$, $\relint (\cF)$.
\begin{enumerate}
\item
Pick any $0\neq y \in \Rm$.
Let 
\[
\{ y \}^\perp =\spann \{a_i\}_{i=1}^{m-1}\quad ( = \nul( y^T)).
\]
We let $R\in \R^{(m-1)\times r}$ be a random matrix, and get
\[
A_1 := \begin{bmatrix} a_1&\ldots& a_{m-1}\end{bmatrix}\!R
 \in \R^{ m \times r}, \quad 
A_1^Ty = 0 \in \R^r.
\]

\item
Pick any $\hat{v} \in \R^r_{++}$ and set $b = A_1 \hat{v}$.
We note that $y^TA_1 = 0$ and $\<b,y\> = 0$.

\item
Pick any matrix $A_2 \in \R^{m\times (n-r)}$ satisfying $(y^T A_2)_i \ne 0, \ \forall i$.
If there exists $i$ such that $(y^TA_2)_i<0$, then change the sign of
the $i$-th column of $A_2$ so that we conclude
\[
(A_2^Ty)  \in \R^{n-r}_{++}.
\]

\item
We define the matrix  $A = \begin{bmatrix} A_1 & A_2\end{bmatrix}  \in \Rmn$.
Then $\{x \in \Rnp : Ax=b\}$ is a polyhedron with a feasible point
$\hat{x} = [\hat{v}; 0 ]$ having $r$ number of positives. The vector $y$
is a solution for the system \cref{eq:auxsystem}:
\[
0\lneqq z=A^Ty 
= \begin{pmatrix} A_1^Ty = 0 \cr A_2^Ty>0\end{pmatrix}, \, b^Ty=0.
\]
We then randomly permute the columns of $A$ to avoid the zeros always
being at the bottom of the feasible variables $x$.
\end{enumerate}

For the empirics, we construct the objective 
function $c^Tx$ of $(\cP)$ as follows.
We choose any $\bar{s} \in \R^n_{++}, \bar{y} \in \Rm$ and set $c = A^T\bar{y} +\bar{s}$. 
Then we have the data for the primal-dual pair of \LPp s  and the primal \emph{fails} strict feasibility:
\[
(\cP_{(A,b,c)})  \quad \min \{ \ c^Tx : Ax = b, \ x\ge 0 \ \}\quad \text{ and } \quad
(\cD_{(A,b,c)})  \quad \max \{ \ b^Ty : A^Ty + s = c, \ s \ge 0 \ \}.
\]
We note that by choosing $\bar{s} \in \R^n_{++}$, the dual problem $(\cD_{(A,b,c)})$ has a strictly feasible point. 
In order to generate instances with strictly feasible points, we
maintain the same data $A,c$ used for the pair $(\cP_{(A,b,c)})$ and
$(\cD_{(A,b,c)})$. We only redefine the right-hand side vector by $\bar{b} = Ax^\circ$, where $x^\circ \in \R^n_{++}$:
\[
(\bar{\cP}_{(A,\bar{b},c)})  \quad \min \{ \ c^Tx : Ax = \bar{b}, \ x\ge 0 \ \}\quad \text{ and } \quad
(\bar{\cD}_{(A,\bar{b},c)})  \quad \max \{ \ \bar{b}^Ty : A^Ty + s = c, \ s \ge 0 \ \}.
\]
The facially reduced instances of $(\cP_{(A,b,c)})$ are denoted by $(\cP_{(A_{FR},b_{FR},c_{FR})})$. They are obtained by discarding the variables that are identically $0$ in the feasible set $\cF$ and the redundant constraints.
In other words, the affine constraints of $(\cP_{(A_{FR},b_{FR},c_{FR})})$ are of the form \cref{eq:setEqauivalencefullrow}.

\subsubsection{Condition Numbers}

In order to illustrate the differences in condition numbers of the
normal matrices, we solve the three families of instances:
\\(i) $(\cP_{(A,b,c)})$,  strictly feasible fails; (ii)
$(\bar{\cP}_{(A,\bar{b},c)})$, strictly feasible holds;
(iii) $(\cP_{(A_{FR},b_{FR},c_{FR})})$,  facially reduced instances of $(\cP_{(A,b,c)})$.


\begin{figure}[h!]
\centering
\includegraphics[height=6cm]{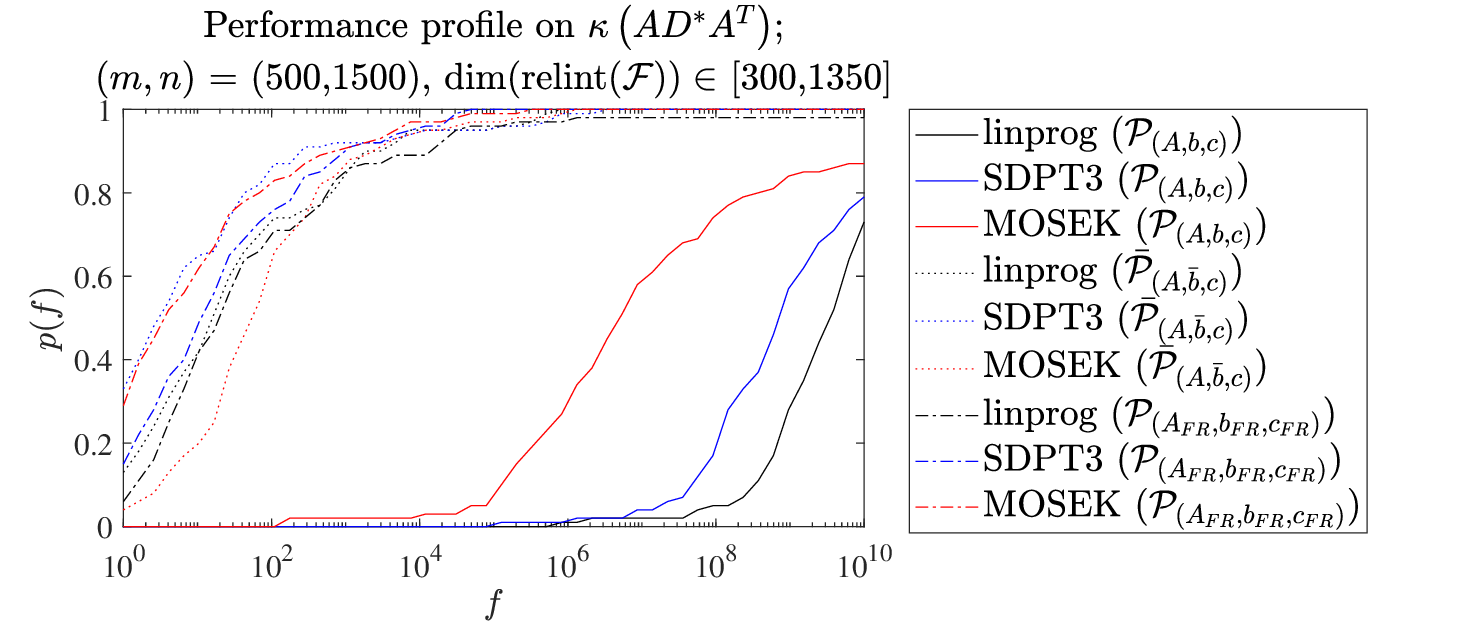}
\caption{Performance profile on $\kappa\left(AD^*A^T\right)$ with(out) strict
feasibility near optimum; various solvers }
\label{fig:condnums}
\end{figure}

In \Cref{fig:condnums} we use a \textdef{performance profile} 
\cite{MR1875515,GouldNicholas2016ANoP} to observe the overall behaviour on different families of instances using the three solvers.
The performance profile provides a useful graphical comparison for solver performances.
\Cref{fig:condnums} displays the performance profile on the condition numbers of the normal matrix $AD^*A^T$ near optimal points from different solvers. We generate $100$ instances for each family that have $\dim(\relint(\cF))\in [300,1350]$. The instance sizes are fixed with $(m,n) = (500,1500)$.
The vertical axis in~\Cref{fig:condnums} represents the statistics  of the performance ratio on $\kappa\left(AD^*A^T\right)$, the condition number of normal matrix near optimum $(x^*,s^*)$; see \cref{eq:condNumnearOpt}. 
Roughly speaking, the vertical axis represents the probability of achieving a performance ratio within a factor of $f$ among all methods used. We used the lower the better statistics.  
The details of the performance ratio are discussed in~\cite{MR1875515,GouldNicholas2016ANoP}.
The solid lines in \Cref{fig:condnums} represent the performance of the instances $(\cP_{(A,b,c)})$ that fail strict feasibility.  
They show that the condition numbers of the normal
matrices near optima are significantly higher when strict feasibility fails.
That is, when strict feasibility fails for $\cF$, the matrix $AD^*A^T$ is more ill-conditioned and it is difficult to obtain search directions of high accuracy.
We also observe that facially reduced instances yield smaller condition numbers near optima. We note that the instances $(\cP_{(A,b,c)})$ and $(\cP_{(A_{FR},b_{FR},c_{FR})})$ are equivalent.


\subsubsection{Stopping Criteria}

We now use the three solvers to observe the accuracy of the
first-order optimality
conditions (KKT conditions) and the running times, for the instances
$(\cP_{(A,b,c)})$ and $(\cP_{(A_{FR},b_{FR},c_{FR})})$, see
\Cref{table:KKTtable}.
We test the average performance of $10$ instances of the size $(n,m,r) = (3000,500,2000)$.
The headers used in \Cref{table:KKTtable} provide the following.
Given solver outputs $(x^*,y^*,s^*)$, the header `KKT' exhibits the
average of the triple consisting of the primal feasibility, dual feasibility 
and complementarity;
\[
\text{KKT} = \left( \frac{\|Ax^*-b\|}{1+ \|b\|}, \ \frac{\|A^T y^*+s^*-c\|}{1+ \|c\|} , \ \frac{\<x^*,s^*\>}{n} \right) .
\]
The headers `iter' and `time'  in \Cref{table:KKTtable} refer to
the average of the number of iterations and the running time in seconds, respectively.
 
\begin{table}[h!]
\centering
\begin{tabular}{|c|c|c|c|}\hline 
\multicolumn{2}{|c|}{\multirow{1}{*}{  }} & \multicolumn{1}{c|}{Non-Facially Reduced System} & \multicolumn{1}{c|}{Facially Reduced System}  \\ \cline{1-4}
\multirow{3}{*}{ linprog }
&\multirow{1}{*}{KKT} & (2.44e-15, 2.05e-12, 3.18e-09) & (5.85e-16, 4.74e-16, 9.22e-09) \\ \cline{3-4} 
&\multirow{1}{*}{iter} & 22.30 & 17.90 \\  \cline{3-4} 
&\multirow{1}{*}{time} &     2.34 &     0.81 \\ \cline{1-4} 
\multirow{3}{*}{ SDPT3 }
&\multirow{1}{*}{KKT} & (8.11e-10, 7.55e-12, 5.65e-02) & (1.43e-11, 3.67e-16, 4.38e-06) \\ \cline{3-4} 
&\multirow{1}{*}{iter} & 25.50 & 19.30 \\  \cline{3-4} 
&\multirow{1}{*}{time} &     1.73 &     0.70 \\ \cline{1-4} \multirow{3}{*}{ mosek }
&\multirow{1}{*}{KKT} & (7.52e-09, 1.80e-15, 3.27e-06) & (3.85e-09, 3.69e-16, 1.19e-06) \\ \cline{3-4} 
&\multirow{1}{*}{iter} & 40.30 & 10.20 \\  \cline{3-4} 
&\multirow{1}{*}{time} &     1.40 &     0.35 \\ \cline{1-4} 
\end{tabular}
\caption{Average of KKT conditions, iterations and time of (non)-facially reduced problems}
\label{table:KKTtable}
\end{table}

From \Cref{table:KKTtable} we observe that facially reduced instances
provide significant improvement in first-order optimality conditions, 
the number of iterations and the running times for all solvers in general.
We note that the instances $(\cP_{(A,b,c)})$ and
$(\cP_{(A_{FR},b_{FR},c_{FR})})$ are equivalent. Hence, our empirics
show that  performing facial reduction as a preprocessing step not only improves the solver running time but also the \emph{quality} of solutions.

\subsubsection{Distance to Infeasibility}
\label{sec:numericsDistInfes}

In this section we present empirics that illustrate the effect of 
perturbations of the right-hand side $b$ when strict feasibility fails.
We recall, from \Cref{prop:distInfrhsPert}, that there exists an arbitrarily small perturbation of the right-hand side vector $b$ of $\cF$ that renders the set $\cF$ infeasible, i.e., $\dist(b,\cF=\emptyset)=0$.  
Moreover, the vector $\Delta b = y$ that satisfies the auxiliary system \cref{eq:auxsystem} is a perturbation that makes the set $\cF$ empty; see \cref{eq:FarkasInfea}.

We follow the steps in \Cref{sec:GenerationPrimal} to generate instances of the order $(n,m)=(1000,200)$ and $r = \relint(\cF) = 900$.
The objective function $c^Tx$ is chosen as presented in \Cref{sec:GenerationPrimal}.
For the fixed $(n,m,r)$, we generate $10$ instances and observe the 
average performance of these instances as we gradually increase the magnitude of the perturbation. 
We recall the matrix $AV$ from \cref{eq:setEqauivalence}.
We use two types of perturbations for $b$;
\[
\Delta b, \text{ where } \Delta b \in \range(AV)^\perp, \quad
\Delta \bar{b}, \text{ where } \Delta \bar{b}\in \range (AV).
\]
We choose $\Delta b$ to be the vector $y$ that satisfies $\cref{eq:auxsystem}$.
For $\Delta \bar{b}$, we choose $AV d$, where $d\in \R^r$ is a randomly chosen vector.
As we increase $\epsilon>0$, we observe the performance of the two families of the systems
\[
\begin{array}{lll}
Ax = b_\epsilon := b - \epsilon \Delta b \ \text{ and } \ Ax = \bar{b}_\epsilon := b - \epsilon \Delta \bar{b} .
\end{array}
\]
We use the interior point method from MATLAB's linprog for the test. 
\Cref{fig:firstoptcond} contains the average of the first-order optimality conditions evaluated at the solver outputs $(x^*,y^*,s^*)$ of these instances; primal feasibility, dual feasibility and the complementarity.

\begin{figure}[ht!]
\centering
\includegraphics[height=6cm]{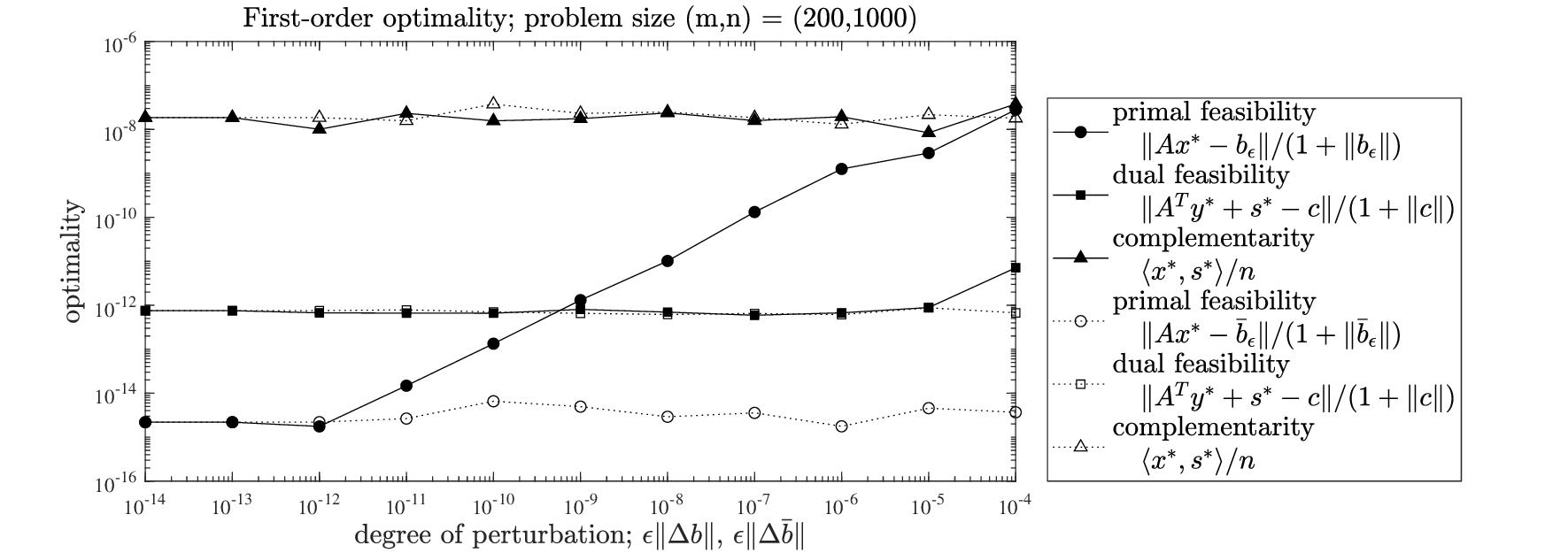}
\caption{Changes in the first-order optimality condition as the perturbation of $b$ increases}
\label{fig:firstoptcond}
\end{figure}
The horizontal axis of \Cref{fig:firstoptcond} indicates the degree of the perturbation imposed on the right-hand side vector $b$, $\epsilon \| \Delta b\|$ and $\epsilon \|\Delta \bar{b}\|$.
The vertical axis indicates the individual component of the first-order optimality. 
From \Cref{fig:firstoptcond}, we observe that the KKT conditions with the perturbation $\Delta \bar{b}$ display a steady performance regardless of the perturbation degree;
see the markers  $\circ$,\text{\scriptsize$ \square, \triangle $} with the dotted lines.
In contrast, the markers $\bullet$,\text{\scriptsize$ \blacksquare, \blacktriangle $} in \Cref{fig:firstoptcond} exhibit the performance of the instances that are perturbed with $\Delta b$ and they display a different performance.
In particular, we see that the relative primal feasibility $\|Ax^*-b_\epsilon \|/(1+\|b_\epsilon \|)$, marked with $\bullet$, consistently increases as the perturbation magnitude $\epsilon \|\Delta b\|$ increases when strict feasibility fails for $\cF$.

\subsubsection{Empirics on Singular Values and $\ips$}
\label{sect:SVDips}

In this section we present our numerical experiment on the ill-conditioning discussed in \Cref{sec:illCondIPTIPS} in terms of $\maxsd$ (see \Cref{def:ips}). 
We generated instances with different settings for $\maxsd = 1,5$ and $10$.
We recall the generation for the vector $y$ and $A_2$ in \Cref{sec:GenerationPrimal}.
For generating and instance with $\maxsd >1$, 
we generated $Y_c = \blkdiag(y^1,\ldots, y^{\ips}) \in \R^{m\times \maxsd}$ and $A_2 = \blkdiag(A_2^1, \ldots, A_2^{\maxsd}) $
of appropriate dimension 
in order to produce the exposing vector $A_2^T \sum_{j=1}^{\maxsd} Y_c(:,j) \ge 0$.
Each column of $Y_c$ serves as a vector satisfying \cref{eq:auxsystem}.

Let $\sigma_{\max}(AD^*A^T)$ be the maximum singular value of $AD^*A^T$.
We count the number of singular values of $AD^*A^T$ that are smaller than $10^{-8}\cdot \sigma_{\max}(AD^*A^T)$.
In \Cref{table:ipsEmpirics} below, we report the cardinality of
\[
\textdef{$\Sigma_0 :=  \{ i : \sigma_i (AD^*A^T)
<\sigma_{\max}(AD^*A^T)\}$}.
\]
We test the average performance on the $20$ instances of the fixed size $(n,m,r)=(3000,500,2000)$.
We display the average number of $| \Sigma_0|$.
We see from \Cref{table:ipsEmpirics} 
\begin{table}[h!]
\centering
\begin{tabular}{|c|c|c|c|c|}\hline 
\multicolumn{2}{|c|}{\multirow{1}{*}{  }} 
& \multicolumn{1}{c|}{ $\maxsd$ = 1} & \multicolumn{1}{c|}{ $\maxsd$ = 5} & \multicolumn{1}{c|}{ $\maxsd$ = 10} \cr \cline{1-5}
\multirow{1}{*}{ linprog }
&\multirow{1}{*}{$| \Sigma_0 |$} & 4.10 & 8.65  & 13.10  \cr \cline{1-5}
\multirow{1}{*}{ SDPT3 }
&\multirow{1}{*}{$| \Sigma_0 |$} & 4.75 & 8.00  & 34.65  \cr \cline{1-5}
\multirow{1}{*}{ MOSEK }
&\multirow{1}{*}{$| \Sigma_0 |$} & 6.45 & 12.35  & 14.50  \cr \cline{1-5}
\end{tabular}
\caption{$\#$ (rel.) small singular values of $AD^*A^T$
near optimum; average over $20$ instances }
\label{table:ipsEmpirics}
\end{table}
a larger $\maxsd$ and $\ips$ values produce a greater number of small singular values.
When there is a significant number of redundant constraints, it is more difficult to obtain a good search direction due to a large number of relatively small singular values.



\subsection{Empirics with Simplex Method}

In this section we compare the behaviour of the dual simplex method with 
instances that have strictly feasible points and instances that do not.
We also observe the degeneracy issues that arise in the instances from \href{https://www.netlib.org/lp/}{NETLIB}.

\subsubsection{Empirics on the Number of Degenerate Iterations}
\label{sec:empiricsDegenIter}

In this section we test how the lack of strict feasibility affects the
performance of the dual simplex method. 
We provide the construction of instances that fail strict feasibility in \Cref{sec:GenerationDual}.
We choose MOSEK for our tests since MOSEK reports the percentage of degenerate iterations as a part of the solver report. 
MOSEK reports the quantity `DEGITER($\%$)', the ratio of degenerate iterations.

Given a set $\cG$ and a point $(y,s)\in \relint(\cG)\subseteq \Rm \oplus \Rn_+$, let $r$ be the number of positive entries of $s$, i.e., $r = |\supp(s)|$. 
In our tests, we gradually increase $r$ for fixed $n,m$ and generate instances for $\cG$ as described in \Cref{sec:GenerationDual}. We then observe the behaviour of the dual simplex method. 
\Cref{table:dualsimplex} contains the results.
In \Cref{table:dualsimplex}, a smaller value for the header $(r/n)\%$
means that there are more entries of $s$ that are identically $0$ in the set $\cG$; and the value $0\%$ means that strict feasibility holds. 
For each triple $(n,m,r)$, we generated $10$ instances and we report the average 
of `DEGITER($\%$)' of these instances.

\begin{table}[h!]
\centering
\begin{tabular}{|c|c|ccccc|}\hline 
\multicolumn{2}{|c|}{  } & 
\multicolumn{5}{|c|}{ $100\%-(r / n) \% $ } \\
\cline{3-7}
\multicolumn{2}{|c|}{  } &
40 & 30 & 20 & 10 & 0 
\\ \hline
\multirow{ 4 }{*}{ $(n,m)$ }
& \multirow{1}{*} (1000, 250) &36.62& 10.18& 0.01& 0.02& 0.00\\ 
& \multirow{1}{*} (2000, 500) &39.72& 18.28& 0.07& 0.15& 0.01\\ 
& \multirow{1}{*} (3000, 750) &25.99& 10.66& 0.32& 0.75& 0.02\\ 
& \multirow{1}{*} (4000, 1000) &29.78& 18.25& 0.25& 0.53& 0.02\\ 
\hline\end{tabular}
\caption{Average of the ratio of degenerate iterations}
\label{table:dualsimplex}
\end{table}

We recall \Cref{thm:LPdegen}: lack of strict feasibility implies that
all basic feasible solutions are degenerate. 
However, we observe more, i.e.,~from \Cref{table:dualsimplex}, 
the frequency of
degenerate iterations increases as $r$ decreases. In other words, higher
degeneracy of the set $\cG$ yields more degenerate iterations when the
dual simplex method is used.


\subsubsection{NETLIB Problems; Perturbations; Stability} 
\label{sec:NetlibExperiment}

We now illustrate the lack of strict feasibility on instances from the
\href{https://www.netlib.org/lp/}{NETLIB} data set. We used the following first $67$ instances that are in standard form at \href{http://users.clas.ufl.edu/hager/coap/format.html}{this link}:
\[
\begin{scriptsize}
\begin{tabular}{llllllllllllllll}
25fv47      &
adlittle$^*$ &
afiro       &
agg$^*$     &
agg2$^*$    &
agg3$^*$    &
bandm$^*$   &
beaconfd$^*$&
blend       &
bnl1$^*$    \\
bnl2$^*$    &
brandy$^*$  &
cre$\_$a$^*$ &  
cre$\_$b$^*$  &
cre$\_$c$^*$    & 
cre$\_$d$^*$     &
d2q06c$^*$    &
degen2$^*$    &
degen3$^*$    &
e226$^*$      \\
fffff800$^*$  &
israel        &
lotfi         &
maros$\_$r7   &   
nug05         &
nug06         &
nug07         &
nug08       &
nug12         &
nug15         \\
nug20         &
osa$\_$07$^*$    &
osa$\_$14$^*$    &
qap12         &
qap15         &
qap8          &
sc105$^*$     &
sc205$^*$     &
sc50a$^*$     &
sc50b$^*$     \\
scagr25       &
scagr7      &
scfxm1$^*$    &
scfxm2$^*$    &
scfxm3$^*$    &
scorpion$^*$  &
scrs8$^*$     &
scsd1         &
scsd6         &
scsd8         \\
sctap1        &  
sctap2        &
sctap3        &
share1b       &
share2b       &
ship04l$^*$   &
ship04s$^*$   &
ship08l$^*$   &
ship08s$^*$   &
ship12l$^*$  \\
ship12s$^*$   &
stocfor1      &
stocfor2      &
stocfor3      &
truss         &
wood1p$^*$    &
woodw$^*$     &
\end{tabular}
\end{scriptsize}
\]
We removed redundant rows to guarantee full row rank of $A$.

Surprisingly, the Slater condition fails for $37$ out of these $67$
instances.\footnote{The instances that fail strict
feasibility are marked with an asterisk $*$ in the list above.}
This has interesting implications for both interior point and
simplex methods. The standard interior point method stopping criteria
is complicated by the unbounded dual optimal set.
For the primal simplex method, every iteration is at a
degenerate \BFS and \textdef{stalling} generally occurs. Therefore preprocessing to eliminate the variables
fixed at $0$ is important. In addition, in order to motivate
robust optimization, it is shown in
e.g.,~\cite{MR2546839,MR1702364} that optimal solutions of many of the
NETLIB instances are extremely sensitive to perturbations in the data.
We now see this to be the case, and we show that \FR regularizes
the problem and avoids this instability.

\index{$P_{\bar{m}}AV$}
\index{degree of degeneracy}

We first use the instance \underline{degen3} in order to illustrate the consequence of lack of strict feasibility. 
The data matrix $A$ after removing two redundant rows is $1501$-by-$2604$.
After \FRp, we obtain the constraint matrix $P_{\bar{m}}AV$ of size $1226$-by-$1648$.
This implies that $2604-1648=956$ number of variables are identically
$0$ on the feasible set. Furthermore, $\ips(\cF)=275$ equality constraints 
are implicitly redundant.
By \Cref{item:degDegen1} of \Cref{cor:degrdegAidentz},
without \FRp , the degree of degeneracy of every \BFS is at least $275$.
Namely, the length of the basis is $1501$ and every basis contains at least $275$ degenerate indices.

We now illustrate that \FR gives a more robust model with respect to
data perturbations using the instance \underline{brandy}. 
Let $(A,b)$ be the data after removing the redundant equality constraints.
Let $(P_{\bar{m}}AV,P_{\bar{m}}b)$ be the data for the facially reduced system.
The data matrices $A$ and $P_{\bar{m}}AV$ have sizes $193$-by-$303$ and $155$-by-$260$, respectively\footnote{This also means that, without \FRp, every \BFS has at least $38$ degenerate basic variables. At least $19.69$ percent of basic variables are always degenerate.}.
Set the perturbation scalars $\epsilon_A = \epsilon_b = 10^{-9}$.
We construct a random  perturbation matrix~$\Phi, \| \Phi \|_F =
\|A\|_F+1$, and
random perturbation vector~$\phi, \|\phi\|_2 = \|b\|_2+1$.
We then solve the problem 
\[
\tilde{p}^*= \max \{ \<c,x\> : (A + \epsilon_A \Phi ) x = b+ \epsilon_b \phi, \  x\ge 0 \}.
\]

\index{$P_{\bar{m}}AV$}
\index{$P_{\bar{m}}b$}

For the facially reduced system, we used the identical perturbation data
$\Phi,\phi$ and discard the rows and columns of $(A,b)$ found from \FRp. 
That is, we use the perturbations
$P_{\bar{m}} \Phi  V$ and $P_{\bar{m}} \phi$ to the facially reduced system after the scaling  $\|P_{\bar{m}} \Phi  V\|_F = \|P_{\bar{m}}AV\|_F+1$ and $\|P_{\bar{m}} \phi\|_2 =  \|P_{\bar{m}}b\|_2 +1$. We then solve 
\[
\max \{ \<V^Tc,v\> : (P_{\bar{m}}AV + \epsilon_A P_{\bar{m}} \Phi  V ) v = P_{\bar{m}} b+ \epsilon_b P_{\bar{m}} \phi, \  v\ge 0 \}. 
\]
In this way, we maintain the identical perturbation structure for the original system and the facially reduced system. 
We also generate a transportation problem and use the aforementioned
perturbations. We note that the transportation problems have Slater
points but are known to be highly degenerate. The size of the data generated is $49$-by-$600$.

In the experiment, we tested the instances using $100$ different perturbation settings. 
We randomly generated perturbations $\Phi, \phi$ with density set at
$0.1$. We used MOSEK simplex with the setting `MSK$\_$OPTIMIZER$\_$FREE$\_$SIMPLEX'.
In \Cref{table:netlibBrandy}, the headers $\epsilon_A$ and $\epsilon_b$ refer to the scalars used for perturbations as described above.
The headers $(A,b)$, $(P_{\bar{m}}AV,P_{\bar{m}}b)$ and $(A_{\text{trans}},b_{\text{trans}})$ refer to the non-facially reduced system, the facially reduced system and the transportation problems, with the perturbations.
The integral values in the table indicate the number of times that the
solver  outputs PRIMAL$\_$AND$\_$DUAL$\_$FEASIBLE. 
Let $p^*$ be the optimal value for the unperturbed instance \underline{brandy}, and let $\tilde{p}^*$ be the optimal value of a perturbed instance of \underline{brandy}.
The non-integral values in the table indicate the average relative difference in the optimal values between $p^*$ and $\tilde{p}^*$. 
The relative difference is computed using the formula $\frac{|p^*-\tilde{p}^*|}{2|p^*+\tilde{p}^*|}$.
For example, the first entry $11$ in \Cref{table:netlibBrandy} means that
$100\!-\!11$ out of $100$ perturbed instances yield infeasibility or unknown status, i.e.,~only $11$ solved successfully. The entry 4.938e-02 next to $11$ indicates the average of $\frac{|p^*-\tilde{p}^*|}{2|p^*+\tilde{p}^*|}$ on those $11$ instances.
\begin{table}[h]
\centering
\begin{tabular}{|cccc|c|c|c|}\hline 
\multicolumn{4}{|c|}{ $\epsilon_A$ \qquad $\epsilon_b$  }&  \multicolumn{1}{|c|}{  $(A,b)$ }  &\multicolumn{1}{|c|}{  $( P_{\bar{m}} AV,P_{\bar{m}} b)$  }&$(A_\text{trans},b_\text{trans})$ \\
\hline
 &1.0e-09 & 0   &  &( 11 ,  4.938e-02 ) & ( 97 ,  6.705e-03 ) & 100 \\  
 &0   & 1.0e-09 &  &( 27 ,  2.470e-10 ) & ( 100 ,  2.208e-10 ) & 100 \\  
 &1.0e-09 & 1.0e-09 &  &( 11 ,  1.339e-01 ) & ( 96 ,  8.719e-03 ) & 100 \\  
\hline
\end{tabular}
\caption{Number of successful results out of $100$ perturbed instances using simplex method on the instance \underline{brandy} and transportation problem}
\label{table:netlibBrandy}
\end{table}
The columns $(A,b)$ and $( P_{\bar{m}}
AV,P_{\bar{m}} b)$ in \Cref{table:netlibBrandy} demonstrate that the facially reduced problems are more immune to data perturbations; the number of successfully solved perturbed instances are significantly larger and the optimal values under the perturbations are less influenced.  
The last column indicates that although the instance may have many
degenerate \BFSp s, having a strictly feasible point is important in
terms of perturbations in data, i.e.,~this emphasizes the difference
between the two types of degeneracy.

\section{Conclusion}
\label{sec:Conclusion}

We have addressed the impact, 
for both theoretical and computational reasons, of loss of strict 
feasibility in \LPp, distinguishing one type of degeneracy at a \BFSp. 
For our numerics we illustrated this using the accuracy of optimality
conditions as well as the effect of perturbations, for
the two most popular classes of algorithms, i.e.,~simplex and interior 
point methods.
For the theory, we proved, using the two-step facial reduction, that if
strict feasibility fails for a linear program, then every \BFS is degenerate. In addition, we showed that facial reduction can
be implemented efficiently to obtain a smaller simpler problem with
strict feasibility, and that this improves stability. This was
illustrated on random problems, as well as instances from the NETLIB data
set.

An essential step for almost all algorithms for linear programming
is preprocessing. One part of preprocessing is 
identifying \emph{fixed variables}.
However, identifying variables fixed at $0$, facial reduction, has not
been done due to expense and accuracy problems.
In this paper we have shown that not eliminating these variables, i.e.,~lack
of strict feasibility, is equivalent to implicit singularity and this helps
explain the numerical difficulties that arise.
We have further provided an efficient preprocessing step for
facial reduction, i.e.,~we continue on phase I of the simplex method
that eliminates all the artificial variables, and eliminate the
variables fixed at $0$.  We observed that a variable
that is basic (positive) in every \BFS corresponds to a redundant
constraint and, by complementary
slackness, corresponds to a variable fixed at $0$ in the dual. 
And redundant constraints have been shown in the literature to 
poorly affect algorithms \cite{MR2238662}. Moreover, identifying
\href{https://link.springer.com/chapter/10.1007/978-1-4615-6103-3_13}{redundant
constraints} is a nontrivial operation e.g.,~\cite{MR1482247}.
This motivates
doing \FR on both the primal and the dual problems. (It is still unclear
whether or not we have to repeat \FR on the primal again.)

In the literature, in particular in textbooks on \LPp,
the method most often used to handle a free variable
$x_i$ is to replace it by two nonnegative variables $x_i \leftarrow
x_i^+ - x_i^{-}$. The means that the optimal solution is
unbounded as one can add an arbitrary positive constant to both new
variables. But then strict feasibility fails for the dual, i.e.,~stable
problems are transformed into ill-conditioned problems. One can
speculate that this may account for the large number of instances
in the NETLIB set where strict feasibility fails and numerical accuracy
is difficult to maintain.

We have presented various numerical experiments that convey the
importance of preprocessing for strict feasibility 
for linear programs,~\Cref{sec:Numerics}.
For interior point methods, we illustrated the importance of strict
feasibility using condition numbers and relationships with \emph{nearness
to infeasibility}. 
We also shed light on the main difficulties that 
arose with the implicit redundant constraints and used the QR
decomposition to show how these difficulties come into play.
This also relates to the implicit problem singularity, $\ips$.
A larger $\ips$ means that there is a higher chance of inducing an infeasible problem under perturbations. 
A large number of degenerate \BFSp s  is believed to cause difficulties
for the simplex method. We have shown that the settings for having many
identically $0$ variables in the dual program yield many degenerate
iterations in the simplex method. 
We also have shown that many NETLIB instances fail strict feasibility
and used selected instances to show the effect of this degeneracy.
Moreover, the facially reduced problems are seen to be more robust with respect to data perturbations. 
In addition, an essential element of solving an \LP is
\textdef{postoptimal analysis}, this becomes difficult when strict
feasibility fails and perturbations of $b$ can lead to infeasibility.
These facts further emphasize that ensuring strict feasibility should be part
of preprocessing for linear programming.


Our results can easily extend to other forms of \LPp s and to more
general problems where degeneracies arise, 
such as the active set method for quadratic programs \cite{WolfePhilip1959TSMf,ForsgrenAnders2015Pada}.
We are currently extending the efficient \FR technique to semidefinite
programs.

\section*{Acknowledgements}

This research is supported by the National Sciences and Engineering Research Council (NSERC) of Canada, Grant $\#$ No. 50503-10827.






\newpage




\phantomsection

\cleardoublepage

\appendix
\label{app:all}

\section{Technical Proofs, Supplementary Materials}
\label{sec:appendixProof}

\subsection{proof of \Cref{coro:PatakiLPversion} }
\label{sec:appendixLPataki}

\begin{proof}
Let $x \in F$ and let $r$ be the number of positive entries in $x$. 
Let $\bar{x} \in \R^r$ be the vector obtained by discarding the $0$ entries in $x$. This is readily given by the following matrix-vector multiplication
$\bar{x} = I(\supp(x),:)x$, where $\supp(x)$ is the support of $x$, the set of indices $\{i: x_i>0\}$.
Let $\bar{A} \in \R^{m\times r}$ be the matrix 
after removing the columns of $A$ that are not in the support of $x$,
i.e., $\bar{A} = A_{\supp(x)}$.
We note that $\bar{x} $ is a particular solution to the system $\bar{A} z = b$ and $\bar{x} >0$.

\index{$\supp$, support}
\index{support, $\supp$}

Suppose to the contrary that $r  > m +d$.
Since $r-m>d$, there exists at least $d+1$ linearly independent vectors, say $v_1,\ldots, v_{d+1} \in \R^r$, satisfying 
$\bar{A} v_i = 0,  \ \forall i = 1,\ldots, d+1$.
For each $i \in \{ 1,\ldots, d+1\}$ and for $\epsilon \in \R$, we define 
\[
\begin{array}{ll}
v_{i,+} := \bar{x} + \epsilon v_i , & v_{i,-} := \bar{x} - \epsilon v_i , \\
x_{i,+} := I(:,\supp(x)) \left( \bar{x} + \epsilon v_i \right) ,
& x_{i,-} := I(:,\supp(x)) \left( \bar{x} - \epsilon v_i \right) . \\
\end{array}
\]
For a sufficiently small $\epsilon$, we have $x_{i,+}, x_{i,-} \in \cF$.
We note that $x = \frac{1}{2} (x_{i,+}+ x_{i,-}), \ \forall i$. Hence, by the definition of face, $x_{i,+} \in F, \ \forall i$.
Therefore, $F$ contains vectors $\{x_{i,+}\}_{i=1,\ldots,d+1} \cup \{x\}$ that are affinely independent and hence $\dim (F) \ge d+1$.
\end{proof}

\subsection{A Condition Measure using Degeneracy}
\label{appendix:leastBFS}

Although degeneracy is a well-known subject, to the best of our knowledge, 
the relationships between degeneracy and stability are rarely discussed. 
We now show that the degree of degeneracy at a \BFS provides useful information on the
robustness of the \LPp; the least degenerate \BFS provides an upper bound on the number of implicitly redundant equalities of the set $\cF$. 
We note that an~$\cF$ that contains a large number of implicit redundancies is a more
\emph{ill-conditioned} set.
(This is comparable to a linear system $Ax=b$ with more redundant rows having the error
in the solution being more susceptible to perturbations of $b$.)

The arguments used in the proof of \Cref{cor:degrdegAidentz} are rather algebraic.
The geometric argument used in the proof of \Cref{thm:degeneracyPataki} provides two useful estimates. 
For any extreme point~$x \in \cF$, the number of nonzero elements of $x$, $|\supp(x)|$, satisfies
\[
|\supp(x)| \le m -\ips (\cF) \implies \ips(\cF) \le m - |\supp(x)| .
\]
Since this holds for all extreme points of $\cF$, we get the following:
\begin{equation}
\label{eq:maxsdBoundforLP}
\sd(\cF) \le \maxsd(\cF)\le \ips(\cF) \le \  \hat{d}:= \min_{\text{\BFS $x\in \cF$}} \ \{\text{degree of degeneracy of $x$} \} .
\end{equation}
The shortest \FR steps for $\cF$, $\sd(\cF)$, is at most $1$,  
thus the inequality $\sd(\cF)\le \hat{d}$ does not provide useful information.
However, the relation~\eqref{eq:maxsdBoundforLP} provides two meaningful corollaries related to $\maxsd(\cF)$ and $\ips(\cF)$:
\begin{enumerate}
\item The inequality $\maxsd(\cF) \le \hat{d}$ implies that the number of nontrivial \FR steps cannot exceed the degree of degeneracy of a least degenerate \BFS of $\cF$;
\item The inequality $\ips(\cF)\le \hat{d}$ shows that it is useful to record the minimum degree of degeneracy observed throughout the simplex iterations. 
This gives an estimate for the number of implicitly redundant equalities of $\cF$.
\end{enumerate}
If $\cF$ contains a nondegenerate \BFSp , we get $\hat{d}=0$. 
Hence $\sd(\cF)=\maxsd(\cF)=\ips(\cF)=0$ and it provides an alternative way to view \Cref{coro:contraPosLPdegen}.
We comment that evaluating and recording the degree of degeneracy of a  \BFS are not expensive operations.

\subsection{Dual Degeneracy in the Absence of Strict Feasibility}
\label{sec:dualDegenSection}

\subsubsection{Implicit Redundancies in the Dual}
\label{sec:dualdegennoSlater}

\index{($\cD$), dual of ($\cP$)}
\index{dual of ($\cP$), ($\cD$)}
\index{$\cG$, dual feasible set}
\index{dual feasible set, $\cG$}

The following \Cref{prop:FarkasDual} provides the corresponding dual
form of the theorem of the alternative for
set $\cG$ in \cref{eq:dualFeaSet}.
\begin{lemma}
[theorem of the alternative in dual form,
{\cite[Theorem 3.3.10]{Cheung:2013}}]
\label{prop:FarkasDual}
Let $\cG\neq \emptyset$ in \cref{eq:dualFeaSet}.
Then, exactly one of the following statements holds:
\begin{enumerate}
\item There exists $(y,s) \in \Rm \oplus \Rnpp $ with $A^T y  + s = c $, i.e., strict feasibility holds for $\cG$;
\item There exists  $w \in \Rn$ such that

\begin{equation}
\label{eq:auxsystem:Dual}
0 \ne w \in \Rnp, \ Aw = 0 \ \text{ and } \ \<c,w\> =0 .
\end{equation}
\end{enumerate}
\end{lemma}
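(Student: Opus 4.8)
The plan is to obtain this dual-form alternative as a direct consequence of \Cref{prop:Farkas}, via a change of representation that recasts the dual strict-feasibility question as a primal one. Observe that strict feasibility of $\cG$ is the question of whether there is a $y\in\Rm$ with $c-A^Ty\in\Rnpp$; equivalently, setting $s=c-A^Ty$, whether the affine subspace
\[
S:=\{\,s\in\Rn : s=c-A^Ty \text{ for some } y\in\Rm\,\}=c+\range(A^T)
\]
meets the open orthant $\Rnpp$. Since $\cG$ is assumed feasible, $S$ is a nonempty affine subspace, so it can be written as $S=\{\,s : Ns=Nc\,\}$, where $N$ is any matrix whose rows form a basis for $\Null(A)$. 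Then $\range(N^T)=\Null(A)=\range(A^T)^\perp$, hence $\Null(N)=\range(A^T)$, and moreover $NA^T=0$, so the system $\{\,s : Ns=Nc,\ s\ge 0\,\}$ is feasible (it contains the $s$-part of any point of $\cG$).

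First I would apply \Cref{prop:Farkas} to the feasible system $\{\,s\in\Rn : Ns=Nc,\ s\ge 0\,\}$. Its first alternative, the existence of $s\in\Rnpp$ with $Ns=Nc$, says precisely that $s\in S\cap\Rnpp$, i.e., that there is a pair $(y,s)$ with $A^Ty+s=c$ and $s>0$; this is item (1) of the lemma. Its second alternative produces $u$ with $0\neq w:=N^Tu\in\Rnp$ and $\<Nc,u\>=0$. Then I would translate this into \cref{eq:auxsystem:Dual}: since $w=N^Tu\in\range(N^T)=\Null(A)$ we get $Aw=0$, and $\<c,w\>=\<c,N^Tu\>=\<Nc,u\>=0$, so together with $w\neq 0$ and $w\ge 0$ this is exactly item (2). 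The reverse translation is the same computation read backwards: any $w$ satisfying \cref{eq:auxsystem:Dual} lies in $\Null(A)=\range(N^T)$, hence equals $N^Tu$ for some $u$, and $\<Nc,u\>=\<c,w\>=0$. Because \Cref{prop:Farkas} asserts that exactly one of its two alternatives holds for the transformed system, the corresponding dichotomy transfers verbatim to items (1) and (2), which completes the proof.

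The only delicate point is the linear-algebra bookkeeping in passing between the two representations: one must verify $\range(N^T)=\range(A^T)^\perp$ (equivalently $\Null(N)=\range(A^T)$) so that the two primal alternatives for $\{\,s : Ns=Nc,\ s\ge 0\,\}$ match coordinatewise with the two dual statements, and one must use the hypothesis that $\cG$ is feasible to guarantee that the transformed primal system is itself feasible, so that \Cref{prop:Farkas} applies. Everything else is routine. A self-contained alternative would be to separate $\Rnpp$ from the affine set $S$ directly by a hyperplane, but reusing \Cref{prop:Farkas} keeps the argument short and parallel to the primal development in \Cref{sec:FacialReduction}.
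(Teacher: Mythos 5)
The paper states this lemma as a citation to Cheung's thesis and does not supply a proof, so there is no in-text argument to compare against. Your derivation is correct and self-contained: eliminating $y$ by replacing the image representation $S=c+\range(A^T)$ with the kernel representation $S=\{s:Ns=Nc\}$, where the rows of $N$ form a basis of $\Null(A)$, turns dual strict feasibility into primal strict feasibility of $\{s\in\Rnp:Ns=Nc\}$, and then \Cref{prop:Farkas} delivers the exact dichotomy. The bookkeeping in both directions ($w=N^Tu\in\range(N^T)=\Null(A)$ gives $Aw=0$, and $\langle c,w\rangle=\langle Nc,u\rangle$; conversely $\Null(A)=\range(N^T)$ lets you lift any $w$ from \cref{eq:auxsystem:Dual} back to a $u$) is sound, and you invoke the feasibility of $\cG$ exactly where it is needed, namely to ensure the transformed system is feasible so that \Cref{prop:Farkas} applies. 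One small remark: \Cref{prop:Farkas} as printed in the paper has a typo, stating $z:=A^Ty\in\Rm_+$ where it should read $z\in\Rnp$; you implicitly use the corrected version when you take $w=N^Tu\in\Rnp$. Also, in the degenerate case $\Null(A)=\{0\}$ the matrix $N$ has no rows and the transformed system is trivially strictly feasible, which is consistent with the fact that alternative (2) cannot hold there, so no separate case is needed. This reduction-to-primal route is the standard one and is a reasonable way to make the cited result self-contained in the paper's own terms.
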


\index{$s_w$, support of exposing vector for $\cG$}
\index{support of exposing vector for $\cG$, $s_w$}

We recall that the vector $A^Ty$ in \cref{eq:xexposed} provides an 
exposing vector to the set $\cF$.
Similarly, a solution $w$ to the auxiliary system \cref{eq:auxsystem:Dual}
provides an exposing vector for $\cG$:

\begin{equation*}
\label{eq:sexposed}
(y,s) \in \cG \,\implies \,
\left\{ \<w,s\> = \< w, c - A^T y \> = \<c,w\> - \< Aw,y \> =  
0 - \<0,y\> = 0\right\}. 
\end{equation*}
We let
\[
\cI_w = \{1,\ldots,n\} \setminus \supp(w), \ 
U = I(:,\cI_w)  \  \text{ and } \ s_w = |\supp(w)|.
\]
Then, the facially reduced system of $\cG$ is given by

\begin{equation}
\label{eq:setEqauivalenceDUAL}
\left\{(y,u) \in \Rm \oplus \R_+^{n-s_w} \ : \
\begin{bmatrix} A^T & U \end{bmatrix} 
\begin{pmatrix} y \\ u \end{pmatrix} = c 
\right\} .
\end{equation}

The notion of degeneracy in \Cref{sec:Background} naturally extends to an arbitrary polyhedron, e.g., see~\cite[Section 2]{BT97}.
For a general polyhedron $P \subseteq \Rn$, a point $p$ in $P$ is called a \textdef{basic solution} if there are $n$ linearly independent active constraints at $p$.
In addition, if there are more than $n$ active constraints at the point $p\in P$, then the point $p$ is called \textdef{degenerate}.
Using this definition of degeneracy, we now show that the absence of
strict feasibility for $\cG$ implies that every basic feasible solution of $\cG$ is degenerate.

First, note that the facially reduced system in
\cref{eq:setEqauivalenceDUAL} contains a redundant constraint, i.e.,~let $w$ 
be an exposing vector for $\cG$ from the system
\cref{eq:auxsystem:Dual}. Then we have
\[
\begin{bmatrix} A\\  U^T \end{bmatrix} w
= \begin{bmatrix} Aw \\  U^Tw \end{bmatrix} =
\begin{bmatrix} 0_{m} \\ 0_{n-s_w} \end{bmatrix} .
\]
In other words, there is a nontrivial row combination of
$\begin{bmatrix} A^T & U \end{bmatrix}$ that yields the $0$ vector
implying the existence of a redundant row and
a redundant constraint in the facially reduced system.
The redundancy immediately implies the dual degeneracy; for any basic solution of $\cG$, there always exists an redundant equality in $\begin{bmatrix} A^T & I \end{bmatrix} \begin{pmatrix} y \\ s\end{pmatrix} =c$.

\subsubsection{Construction of Dual \LPp s without Strict Feasibility}
\label{sec:GenerationDual}

We first show how to generate an instance for the dual feasible set
$\cG$ that fails strict feasibility.
The construction is similar to the one in \Cref{sec:GenerationPrimal}.
We generate a degenerate problem by finding a feasible auxiliary system~\cref{eq:auxsystem:Dual}.
Given $m,n,r\in \N$, we construct $A\in \Rmn$ and $c\in \Rn$ that satisfy \cref{eq:auxsystem:Dual}  with $\dim(\relint (\cG) ) = m+r$.
\begin{enumerate}
\item 
Pick any $0 \ne w \in \Rnp$ with $|\supp(w)| = n-r$.
Let 
\[ 
\{w\}^\perp = \spann  \{d_i\}_{i=1}^{n-1} \subset \Rn \quad \left( = \nul (w^T) \right) . 
\]
We let $D \in \R^{(n-1)\times n}$ be the matrix where its rows consist of $\{d_i^T\}_{i=1}^{n-1}$.
We let $R\in \R^{m\times (n-1)}$ be a random matrix and we set $A = R D$.
We note that $Aw=0$.
\item Pick $s\in \Rnp$ so that 
\[
s_i  =
\left\{
\begin{array}{ll}
0 & \text{if } i\in \supp(w) \\
\text{positive } & \text{if } i \notin \supp(w).
\end{array} 
\right.
\]
We note that $\<w,s\> =0$ holds.
\item Pick $y \in \Rm$ and set $c = A^T y +s$. We note that $\<c,w\> =0$ holds.
\end{enumerate}
For the empirics, we construct the objective function $b^Ty$ of $(\cD)$ by choosing a vector $\hat{x}\in \Rnpp$ and setting $b = A\hat{x}$.

\newpage

\cleardoublepage
\addcontentsline{toc}{section}{Index}
\label{ind:indexpg}
\printindex

\cleardoublepage 




\bibliographystyle{plain}


\bibliography{.haesolArXivJan2023} 


\addcontentsline{toc}{section}{\textbf{References}}


\end{document}